\renewcommand{\Bbb}{\mathbb}
\renewcommand{\leq}{\leqslant}
\renewcommand{\geq}{\geqslant}
\newcommand{\la}{\lambda}
\newcommand{\ptl}{\partial}
\newcommand{\rr}{{\mathbb{R}}}
\newcommand{\h}{\mathcal{H}}
\newcommand{\sub}{\subset}
\newcommand{\subeq}{\subseteq}
\newcommand{\escpr}[1]{\big<#1\big>}
\newcommand{\Sg}{\Sigma}
\newcommand{\sg}{\sigma} 
\newcommand{\Om}{\Omega}
\newcommand{\eps}{\varepsilon}
\newcommand{\var}{\varphi}
\newcommand{\ga}{\gamma}
\newcommand{\mnh}{|N_{h}|}
\newcommand{\nuh}{\nu_{h}}
\newcommand{\stres}{\mathbb{S}^3}
\newcommand{\s}{\mathcal{S}}
\newcommand{\be}{\mathcal{B}}
\newcommand{\sph}{\mathbb{S}}
\newcommand{\ttt}{\mathcal{T}}
\DeclareMathOperator{\divv}{div}
\newtheorem{theorem}{Theorem}[section]
\newtheorem{proposition}[theorem]{Proposition}
\newtheorem{corollary}[theorem]{Corollary}
\theoremstyle{definition}
\newtheorem{remarks}[theorem]{Remarks}
\theoremstyle{remark}
\numberwithin{equation}{section}
\begin{document}

\bibliographystyle{amsplain}

\title[Area-minimizing properties of Pansu spheres in $\stres$]
{Area-minimizing properties of Pansu spheres \\ in the sub-riemannian $3$-sphere}

\author[Ana Hurtado]{Ana Hurtado}
\address{Departamento de Geometr\'{\i}a y Topolog\'{\i}a and Excellence Research Unit ``Modeling Nature'' (MNat), Universidad de Granada, E-18071,
Spain.}
\email{ahurtado@ugr.es}

\author[C\'esar Rosales]{C\'esar Rosales}
\address{Departamento de Geometr\'{\i}a y Topolog\'{\i}a and Excellence Research Unit ``Modeling Nature'' (MNat) Universidad de Granada, E-18071,
Spain.} 
\email{crosales@ugr.es}

\date{\today}

\thanks{The authors were supported by MINECO grant 
MTM2017-84851-C2-1-P and Junta de Andaluc\'ia grants A-FQM-441-UGR18 and FQM325.} 

\subjclass[2010]{53C17, 49Q05, 49Q20}

\keywords{Sub-Riemannian spheres, Plateau problem, isoperimetric problem, calibrations}

\begin{abstract}
We consider the sub-Riemannian $3$-sphere $(\stres,g_h)$ obtained by restriction of the Riemannian metric of constant curvature $1$ to the planar distribution orthogonal to the vertical Hopf vector field. It was shown in \cite{hr1} that $(\stres,g_h)$ contains a family of spherical surfaces $\{\s_\la\}_{\la\geq 0}$ with constant mean curvature $\lambda$. In this work we first prove that the two closed half-spheres of $\s_0$ with boundary $C_0=\{0\}\times\sph^1$ minimize the sub-Riemannian area among compact $C^1$ surfaces with the same boundary. We also see that the only $C^2$ solutions to this Plateau problem are vertical translations of such half-spheres.  Second, we establish that the closed $3$-ball enclosed by a sphere $\s_\la$ with $\la>0$ uniquely solves the isoperimetric problem in $(\stres,g_h)$ for $C^1$ sets inside a vertical solid tube and containing a horizontal section of the tube. The proofs mainly rely on calibration arguments.
\end{abstract}

\maketitle

\thispagestyle{empty}

\section{Introduction}
\label{sec:intro}
\setcounter{equation}{0} 

The study of variational questions related to the area in sub-Riemannian spaces has been focus of attention in the last years, especially in the Heisenberg group, which is the simplest non-trivial flat sub-Riemannian manifold. Our aim in this work is to investigate area-minimizing surfaces in the most relevant $3$-dimensional sub-Riemannian model of positive curvature, namely the $3$-sphere $(\stres,g_h)$. Here the sub-Riemannian structure comes from the restriction of the Riemannian metric of constant curvature $1$ to the \emph{horizontal} planar distribution orthogonal to the fibers of the Hopf fibration, see Sections~\ref{subsec:sphere} and \ref{subsec:hopf}.  In this setting we will analyze the \emph{Plateau problem} and the \emph{isoperimetric problem}, where we seek minimizers of the area with a boundary or volume constraint, respectively. More precisely, we will show that the \emph{Pansu spherical surfaces} in $(\stres,g_h)$ provide solutions to these problems under certain conditions.

In the first Heisenberg group, Pansu~\cite{pansu2} employed a Santal\'o formula to derive an isoperimetric inequality. He also explained in \cite{pansu1} how to construct constant mean curvature spheres with rotational symmetry around the center of the group, and conjectured that the isoperimetric regions are, up to congruence, the topological $3$-balls enclosed by these spheres. This conjecture has been supported by several statements where additional geometric and regularity conditions are assumed, see \cite[Ch.~8]{survey} and the introductions in \cite{ritore-calibrations,hr4} for more details and references. In the sub-Riemannian sphere $(\stres,g_h)$ an isoperimetric inequality was proved by Chanillo and Yang~\cite{chanillo-yang} by means of a sub-Riemannian Santal\'o formula, see also the paper of Prandi, Rizzi and Seri~\cite{santalo-sr}. In this context the existence of isoperimetric regions is guaranteed by compactness, see for instance \cite{miranda-bv} and \cite[Ch.~5]{survey}. In \cite{hr1} the authors discovered a one-parameter family $\{\s_\la\}_{\la\geq 0}$ of spherical surfaces of revolution having constant mean curvature $\la$ in $(\stres,g_h)$. In later works we established some results concerning the spherical Pansu conjecture~\cite[Sect.~6]{hr2} saying that, up to congruence, any isoperimetric region in $(\stres,g_h)$ is bounded by some sphere $\s_\la$. Let us describe these results. 

In \cite[Sect.~4]{hr2} we derived an Alexandrov-type theorem stating that the unique compact $C^2$ critical points for the isoperimetric problem in $(\stres,g_h)$, and contained within an open hemisphere, are congruent to a sphere $\s_\la$. This was obtained by using the first variation formulas for sub-Riemannian area and volume~\cite{hr1}, the ruling property of $C^2$ surfaces with constant mean curvature~\cite{chmy,hr1} 
and the local behaviour of such surfaces around the singular set -where the tangent plane is horizontal- studied by Cheng, Hwang, Malchiodi and Yang~\cite[Sect.~3]{chmy}. This theorem does not hold if the surface is not contained inside an open hemisphere since it is possible to find critical points for the isoperimetric problem in $(\stres,g_h)$ with the topology of the $2$-torus $\sph^1\times\sph^1$, see \cite{chmy,hr1,hr3}. This phenomenon is coherent with the topological restriction in \cite[Thm.~E]{chmy} that any compact constant mean curvature $C^2$ surface in $(\stres,g_h)$ is homeomorphic to the sphere $\sph^2$ or the torus $\sph^1\times\sph^1$. With the aim of discarding toroidal surfaces as boundaries of isoperimetric regions we considered stable surfaces, i.e., those minimizing the area up to second order for volume-preserving variations. In this direction, we showed in \cite[Sect.~5]{hr2} that the spheres $\s_\la$ are stable surfaces in $(\stres,g_h)$. As a matter of fact, in the recent work \cite[Sect.~5]{hr4} we have characterized the spheres $\s_\la$ as the unique complete, embedded and stable $C^2$ surfaces in $(\stres,g_h)$. This implies in particular that the aforementioned Pansu conjecture in $(\stres,g_h)$ is true under $C^2$ regularity of the isoperimetric solutions, see \cite[Sect.~6]{hr4}. Unfortunately, though the spheres $\s_\la$ with $\la>0$ are $C^2$ smooth (but not $C^3$ around the poles), the $C^2$ regularity of isoperimetric boundaries in sub-Riemannian $3$-manifolds is a difficult open question, even in the first Heisenberg group.  

Our objective in these notes is to provide a new evidence supporting Pansu's conjecture in $(\stres,g_h)$, as that as an area-minimizing property for the closed half-spheres of $\s_0$ spanned by a vertical great circle. Let us give a detailed description of our motivations, results and techniques.

The surface $\s_0$ is a totally geodesic $2$-sphere of $\stres$ which is critical for the sub-Riemannian area but does not minimize since its deformation by Riemannian equidistants decreases the area~\cite[Re.~5.8]{hr2}. However, the sphere $\s_0$ is a second order minimum of the area under variations fixing the equator $C_0:=\{0\}\times\sph^1$, see \cite[Prop.~5.5]{hr2}. Thus, it is natural to ask if the corresponding half-spheres $\s^+_0$ and $\s^-_0$ with boundary $C_0$ solve the Plateau problem in $(\stres,g_h)$ of minimizing the area among compact surfaces with the same boundary. In Theorem~\ref{th:main} we answer positively this question for $C^1$ competitors and prove the uniqueness statement that, up to vertical translations, $\s_0^+$ is the only minimizer of class $C^2$ for this problem. We remark that existence and uniqueness of solutions for the sub-Riemannian mean curvature operator with Dirichlet boundary condition have been analyzed in previous works, see for instance \cite{chmy,chy,cchy2}.

As to our isoperimetric result, the motivation comes from a previous theorem in the Heisenberg group, where Ritor\'e~\cite[Sect.~3]{ritore-calibrations} deduced that the Pansu spheres minimize the perimeter for fixed volume among those sets inside a vertical right cylinder that contain a horizontal disk. In a recent work, Pozuelo and Ritor\'e~\cite[Sect.~6]{pozuelo-ritore} have generalized this property for Pansu-Wulff shapes in the Heisenberg group. In Theorem~\ref{th:main2} we establish that the topological $3$-balls $\be_\la$ enclosed by the spherical surfaces $\s_\la$ with $\la>0$ uniquely solve the isoperimetric problem in $(\stres,g_h)$ in the family of $C^1$ sets within a vertical solid tube and containing a horizontal section of the tube. Here, a \emph{vertical solid tube} $\mathcal{W}_\mu$ is a metric tube around the great circle $L:=\sph^1\times\{0\}$, see equation \eqref{eq:wla}; in particular, the boundary $\ptl\mathcal{W}_\mu$ is the Clifford torus of points in $\stres$ at Riemannian distance $\arccos(1/\mu)$ from $L$.

The proofs of our area comparisons in Theorems~\ref{th:main} and \ref{th:main2} rely on calibration arguments. These have been extensively employed in the Calculus of Variations and provide a powerful tool to derive area-minimizing properties of minimal and constant mean curvature surfaces, see for example \cite[Sect.~1.3.1]{ritore-libro} and the references therein. In sub-Riemannian geometry, the use of calibrations have appeared in different contexts, see for instance the works of Cheng, Hwang, Malchiodi and Yang~\cite[Sect.~6]{chmy}, Barone Adesi, Serra Cassano and Vittone \cite[Sect.~2]{bscv}, Ritor\'e and the second author~ \cite[Sect.~5]{rr2}, Ritor\'e \cite[Sect.~3]{r2}, \cite[Sect.~3]{ritore-calibrations} and Pozuelo and Ritor\'e~\cite[Sect.~6]{pozuelo-ritore}. 

To construct calibrations in our setting we proceed as follows, see Proposition~\ref{prop:foliation} for the details. For a given half-sphere $\s_\la^+$ we use translations along the fibers of the Hopf fibration to produce a family $\{\s_\la^+(t)\}_{t\in[0,2\pi)}$ of surfaces with constant mean curvature $\la$ that foliates the vertical solid tube $\mathcal{W_\la}$ (this one coincides with $\sph^3-C_0$ for $\la=0$). Then, the calibration vector field $X^+_\la$ is defined on $\mathcal{W}_\la-L$ as the horizontal Gauss map along the leaves of the foliation, so that its Riemannian divergence equals $-2\la$. By repeating this construction with the half-spheres $\s^-_\la$ we obtain another calibration vector field $X^-_\la$. It is worth mentioning that $X^+_\la$ and $X^-_\la$ cannot be extended continuously to $L$ when $\la>0$ nor to $C_0\cup L$ when $\la=0$. This difficulty is overcome by means of an approximation argument so that, after some estimates, we deduce our area comparisons by applying the divergence theorem to these vector fields over suitable regions and passing to the limit.

On the other hand, the characterization of equality cases is considerably easier in Theorem~\ref{th:main2}. The main reason is that the boundary $\Sg$ of any set $\Om$ in the conditions of the statement is tangent to a spherical surface $\s_\la$ along a circle $C_\la$ of non-singular points. By using this, we can conclude from the equality case that $\Sg$ and $\s_\la$ coincide in a small neighborhood of $C_\la$ in $\Sg$. From here, a completeness argument shows that $\Sg=\s_\la$, as desired. The discussion of equality in Theorem~\ref{th:main} is more difficult because the hypotheses are less rigid. Indeed, if $\Sg$ is a $C^1$ solution to the Plateau problem in $(\stres,g_h)$ with boundary $C_0$ then, locally around $C_0$, the surface $\Sg$ is union of small geodesic segments that leave from $C_0$ in the horizontal direction determined by an angle function $\sg$. Since the half-sphere $\s^+_0(t)$ corresponds to a special choice of $\sg$, a careful analysis of the surface $\Sg$ as a function of $\sg$ is required to deduce that $\Sg=\s^+_0(t)$ for some $t\in[0,2\pi)$, see Proposition~\ref{prop:uniqueplateau}. In our analysis, that partly depends on the previous work~\cite[Sect.~4]{hr3} where we studied how $\sg$ influences the geometry of $\Sg$, the main difficulty is the possible existence of curves of singular points. This issue has appeared frequently in previous uniqueness results for the sub-Riemannian minimal surface equation with Dirichlet boundary condition. In our proof we are able to discard singular curves by assuming the $C^2$ regularity of $\Sg$, which allows us to employ the $C^1$ regularity of these curves and the local behaviour of $\Sg$ near them, see \cite{chmy,rr2,hr1}. Unfortunately, the singular set for $C^1$ solutions of the Plateau problem can be much more complicated, as was shown in the first Heisenberg group by Cheng, Hwang, Malchiodi and Yang~\cite{chmy2}. By this reason, our argument in Proposition~\ref{prop:uniqueplateau} does not hold directly for $C^1$ surfaces, though the characterization of equality in this situation would be probably the same. 

Finally, we must point out that the optimal regularity for the Plateau problem has not yet been established, even in the Heisenberg group, where it is possible to find examples with low regularity~\cite{pauls-regularity,chy,mscv,r2}. However, for viscosity solutions in the Heisenberg group the work of Capogna, Citti and Manfredini~\cite{MR2583494} implies $C^{1,\alpha}$ regularity with $\alpha\in (0,1)$. 

The paper is organized into three sections. In Section~\ref{sec:preliminaries} we gather some preliminary material about the sub-Riemannian sphere $(\stres,g_h)$ and the geometry of surfaces in $\stres$. In Section~\ref{sec:foliation} we recall some facts about the spherical surfaces $\{\s_\la\}_{\la\geq 0}$ that we use to construct foliations of vertical solid tubes and the associated calibration vector fields. Section~\ref{sec:plateau} is devoted to the Plateau problem for the circle $C_0$. Finally, in Section~\ref{sec:isoperimetric} we prove the isoperimetric property of the spheres $\s_\la$.

\section{Preliminaries}
\label{sec:preliminaries}
\setcounter{equation}{0}

Here we introduce the geometric setup and gather some basic facts that will be used throughout the paper. We will mostly follow the notation employed in \cite{hr1}.

\subsection{The sub-Riemannian $3$-sphere}
\label{subsec:sphere}

In the Euclidean space $\rr^4$ we identify a point $p=(x_{1},y_{1},x_{2},y_{2})$ with the quaternion $x_{1}+iy_{1}+jx_{2}+ky_{2}$. Given $p,q\in\rr^4$ we denote by $p\cdot q$ and $\escpr{p,q}$ the quaternion product and the scalar product of $p$ and $q$, respectively. The sphere $\stres\sub\rr^4$ is the set of unit quaternions. The pair $(\stres,\cdot)$ is a compact Lie group with identity element $e:=(1,0,0,0)$. The restriction of the scalar product $\escpr{\cdot\,,\cdot}$ to the tangent bundle $T\stres$ provides the Riemannian metric $g$ on $\stres$ of constant sectional curvature $1$. The associated Riemannian distance $d$ in $(\stres,g)$ can be computed as $d(p,q)=\arccos(\escpr{p,q})$.

For any $p\in\stres$, the \emph{right and left translations} by $p$ are the diffeomorphisms defined by $q\mapsto q\cdot p$ and $q\mapsto p\cdot q$, respectively. These translations are isometries of $(\stres,g)$ since the Riemannian metric $g$ is bi-invariant.  A basis of right invariant vector fields in $(\stres,\cdot)$ is 
\begin{align}
\label{eq:hopf}
T(p):&=i\cdot p=-y_{1}\,\frac{\ptl}{\ptl x_{1}}+x_{1}\,\frac{\ptl}{\ptl y_{1}}
-y_{2}\,\frac{\ptl}{\ptl x_{2}}+x_{2}\,\frac{\ptl}{\ptl y_{2}},
\\
\nonumber
E_1(p):&=j\cdot p=-x_{2}\,\frac{\ptl}{\ptl x_{1}}+y_{2}\,\frac{\ptl}{\ptl
y_{1}}+x_{1}\,\frac{\ptl}{\ptl x_{2}}-y_{1}\,\frac{\ptl}{\ptl y_{2}},
\\
\nonumber
E_{2}(p):&=k\cdot p=-y_{2}\,\frac{\ptl}{\ptl x_{1}}-x_{2}\,\frac{\ptl}{\ptl
y_{1}}+y_{1}\,\frac{\ptl}{\ptl x_{2}}+x_{1}\,\frac{\ptl}{\ptl y_{2}},
\end{align}
where $\ptl/\ptl x_i$ and $\ptl/\ptl y_i$ with $i=1,2$ are the Euclidean coordinate vector fields in $\rr^4$. Note that $\{E_1,E_2,T\}$ is an orthonormal basis of $T\stres$ with respect to the Riemannian metric $g$. 

We call \emph{horizontal distribution} in $\stres$ to the smooth planar distribution $\h$ generated by $E_{1}$ and $E_{2}$. Note that $\h=\text{Ker}(\sigma)$, where $\sigma$ is the contact $1$-form in $\stres$ given by $\sigma:=-y_{1}\,dx_{1}+x_{1}\,dy_{1}-y_{2}\,dx_{2}+x_{2}\,dy_{2}$. This implies that $\h$ is completely nonintegrable. We consider the orientation in $\h$ (resp. in $T\stres$) for which $\{E_1,E_2\}$ (resp. $\{E_1,E_2,T\}$) is a positive basis.  The oriented volume form $dv$ in $(\stres,g)$ equals $\frac{1}{2}\,\sigma\wedge (d\sigma)$. More precisely $(dv)(u_1,u_2,u_3)=\text{det}(u_1,u_2,u_3)$, where the coordinates of $u_i$ are taken with respect to $\{E_1,E_2,T\}$. The \emph{horizontal projection} of a tangent vector $X$ is denoted by $X_{h}$.  A vector $X$ on $\stres$ is \emph{horizontal} if $X=X_h$. In case $X$ is proportional to $T$ then we say that $X$ is \emph{vertical}.

The \emph{sub-Riemannian metric} $g_h$ on $\stres$ is the restriction to $\h$ of the Riemannian metric $g$. In particular, $\{E_{1},E_{2}\}$ provides a positive orthonormal basis of $\h$ with respect to $g_h$. By an \emph{isometry} of $(\stres,g_h)$ we mean a diffeomorphism $\phi:\stres\to\stres$ whose differential at any $p\in\stres$ is an orientation-preserving linear isometry between $\h_p$ and $\h_{\phi(p)}$. 
We say that two subsets $\Om_{1}$ and $\Om_{2}$ in $\stres$ are \emph{congruent} if there is an isometry $\phi$ of $(\stres,g_h)$ such that $\phi(\Om_{1})=\Om_{2}$.

For any tangent vector $X$ on $\stres$ we denote $J(X):=D_XT$, where $D$ is the Levi-Civita connection in $(\stres,g)$. The restriction of $J$ to $\h$ coincides with the orientation-preserving 90 degree rotation on $\h$, see \cite[Sect.~2]{hr1} for details. In particular, we get
\begin{equation}
\label{eq:jota}
g_h(J(X),X)=0, \quad \text{for any } X\in\h.
\end{equation}
We remark that $J(X)=i\cdot X$, for any horizontal vector $X$.

Sometimes we will identify $\stres$ with the set $\{(z_1,z_2)\in\mathbb{C}^2\,;\,|z_1|^2+|z_2|^2=1\}$, where $|z|$ is the modulus of the complex number $z$. If we consider the circle
\begin{equation}
\label{eq:ele}
L:=\sph^1\times\{0\},
\end{equation}
then the Riemannian distance $d_L(p)$ in $(\stres,g)$ between $p\in\stres$ and $L$ can be computed as
\begin{equation}
\label{eq:distance}
d_L(p)=\arccos(|z_1|), \quad\text{for any } p=(z_1,z_2)\in\stres.
\end{equation}
In particular $d_L$ is a $C^\infty$ function whenever $0<|z_1|<1$. Note also that the circle $C_0:=\{0\}\times\sph^1$ is characterized by equality
\[
C_0=\{p\in\stres\,;\,d_L(p)=\pi/2\}.
\]
On the other hand, the \emph{rotation} $r_\theta:\stres\to\stres$ defined by
\begin{equation}
\label{eq:rtheta}
r_{\theta}(z_1,z_2):=(z_1,\exp(i\theta)\cdot z_2)
\end{equation}
is an isometry of $(\stres,g_h)$ fixing $L$, since it carries the orthonormal basis
$\{E_{1},E_{2},T\}$ at $p$ to the orthonormal basis
$\{\cos(\theta)\,E_{1}
+\sin(\theta)\,E_{2},-\sin(\theta)\,E_{1}+\cos(\theta)\,E_{2},T\}$ at
$r_{\theta}(p)$.  We say that a set $\Om\subseteq\stres$ is \emph{rotationally invariant} if $r_\theta(\Om)=\Om$ for any $\theta\in [0,2\pi)$.

\subsection{Vertical translations and Clifford tori}
\label{subsec:hopf}

The \emph{Hopf fibration} is the Riemannian submersion $\mathcal{F}:\stres\to\stres\cap\{x_1=0\}$  given by
$\mathcal{F}(p):=\overline{p}\cdot i\cdot p$, where $\overline{p}$ is the conjugate of the quaternion $p$.  If we write $p=(z_1,z_2)$ with $|z_1|^2+|z_2|^2=1$, then we have
\[
\mathcal{F}(p)=\big(0,|z_1|^2-|z_2|^2,2\,\text{Im}(z_1\cdot\overline{z}_2),2\,\text{Re}(z_1\cdot\overline{z}_2)\big).
\]
The fiber of $\mathcal{F}$ through $p\in\stres$ is the great circle parameterized by $\varphi_t(p)$ with $t\in\rr$, where 
\begin{equation}
\label{eq:verttrans}
\varphi_t(p):=\exp(it)\cdot p.
\end{equation} 
The family of maps $\{\varphi_t\}_{t\in\rr}$ provides the one-parameter group of diffeomorphisms associated to the vector field $T$ in \eqref{eq:hopf}, which is usually called the \emph{vertical Hopf vector field} in $\stres$. We will refer to the left translations $\{\varphi_t\}_{t\in\rr}$ as \emph{vertical translations}. The \emph{vertical axis} is the great circle $L$ in \eqref{eq:ele}, which is parameterized by the integral curve of $T$ through the identity element $e$. Note that any vertical translation $\varphi_t$ is an isometry of $(\stres,g_h)$ with $\var_t(L)=L$ and $\var_t(C_0)=C_0$, where $C_0:=\{0\}\times\sph^1$.

For any $\rho\in (0,1)$ we consider the geodesic circle $c_{\rho}:=\sph^3\cap\{x_{1}=0,\,y_1=2\rho^2-1\}$. The set $\mathcal{F}^{-1}(c_{\rho})$ is the \emph{vertical Clifford torus} $\ttt_\rho:=\sph^1(\rho)\times\sph^1(\sqrt{1-\rho^2})$ in $\stres$. Note that the vertical vector field $T$ is tangent over $\ttt_\rho$. By taking into account \eqref{eq:distance} it follows that $\ttt_\rho$ is a tube around $L$, in the sense that
\begin{equation}
\label{eq:clifford}
\ttt_\rho=\{p\in\stres\,;\,d_L(p)=\arccos(\rho)\}.
\end{equation}
It is also clear by \eqref{eq:rtheta} that $\ttt_\rho$ is a rotationally invariant surface.

\subsection{Cylindrical coordinates}
\label{subsec:cylcoor}

In the closed $2$-dimensional half-sphere
\[
\sph^2_+:=\stres\cap\{x_2\geq 0, y_2=0\}
\] 
we take \emph{spherical coordinates} $(\omega,\tau)$, with $\omega\in [0,\pi/2]$ and $\tau\in[0,2\pi)$. In complex notation any point in $\sph^2_+$ can be written as $(\cos(\omega)\cdot\exp(i\,\tau),\sin(\omega))$. For any  $p=(z_1,z_2)\in\stres$ it is clear that $p=r_\vartheta(p')$ for some $\vartheta\in[0,2\pi)$, where $p':=(z_1,|z_2|)\in\sph^2_+$ and $r_\vartheta$ is the rotation defined in \eqref{eq:rtheta}. 
So, we can find $\omega\in[0,\pi/2]$ and $\tau\in[0,2\pi)$ such that $p$ is expressed in \emph{cylindrical coordinates} $(\omega,\tau,\vartheta)$ as
\begin{equation}
\label{eq:cylindrical}
p=(\cos(\omega)\cdot\exp(i\,\tau),\sin(\omega)\cdot\exp(i\,\vartheta)).
\end{equation}
It follows from \eqref{eq:distance} that $\omega=d_L(p)$, where $L$ is the vertical axis. So, the coordinate $\omega$ is uniquely determined by $p$. On the other hand, equality \eqref{eq:cylindrical} also holds for $\tau'=\tau+2n\pi$ and $\vartheta'=\vartheta+2m\pi$ with $n,m\in\mathbb{Z}$. From \eqref{eq:rtheta} and \eqref{eq:verttrans}, if $p$ has cylindrical coordinates $(\omega,\tau,\vartheta)$, then $r_\theta(p)$ and $\var_t(p)$ have cylindrical coordinates $(\omega,\tau,\vartheta+\theta)$ and $(\omega,\tau+t,\vartheta+t)$, respectively (the values of $\tau$ and $\vartheta$ are determined modulo $2\pi$). 

If $\Sg$ is a rotationally invariant surface in $\stres$, then there is a generating curve of $\Sg$ in $\sph^2_+$ with associated spherical coordinates $(\omega,\tau)$. For the vertical Clifford torus $\mathcal{T}_\rho$ in \eqref{eq:clifford}, the generating curve is the circle of equation $\omega=\arccos(\rho)$. For the totally geodesic $2$-sphere in $(\stres,g)$ obtained by intersection with the hyperplane of equation $\sin(\tau_0)\,x_1-\cos(\tau_0)\,y_1=0$ with $\tau_0\in[0,\pi)$, the generating curve is described by equations $\tau=\tau_0$ and $\tau=\tau_0+\pi$.

\subsection{Horizontal geometry of surfaces}
\label{subsec:geometry}

Let $\Sg$ be a $C^1$ surface in $\stres$ (we will suppose that $\Sg$ is embedded unless otherwise stated). The \emph{singular set} $\Sg_0$ consists of the points $p\in\Sg$ for which the tangent plane $T_p\Sg$ equals the horizontal plane $\h_{p}$. Since $\h$ is completely nonintegrable, it follows by Frobenius theorem that $\Sg_0$ is closed and has empty interior in $\Sg$. Hence the \emph{regular set} $\Sg-\Sg_0$ is open and dense in $\Sg$. If $N$ is a unit normal vector field to $\Sg$ in $(\stres,g)$ then $\Sg_0=\{p\in\Sg\,;\,N_h(p)=0\}$. In the regular set $\Sg-\Sg_0$ we define the \emph{horizontal Gauss map} $\nu_h$ and the \emph{characteristic vector field} $Z$ by
\begin{equation*}
\label{eq:nuh}
\nu_h:=\frac{N_h}{|N_h|}, \qquad Z:=J(\nuh).
\end{equation*}
Note that $Z$ is horizontal and orthogonal to $\nu_h$ by \eqref{eq:jota}. In particular, $Z$ is always tangent to $\Sg$. Moreover, for any $p\in\Sg-\Sg_0$, we have the
orthonormal basis of $T_{p}\stres$ given by $\{Z(p),\nuh(p),T(p)\}$. As a consequence, we deduce the identity
\begin{equation}
\label{eq:normal}
N=\mnh\,\nuh+\escpr{N,T}\,T\quad\text{on } \Sg-\Sg_0.
\end{equation}
The integral curves of $Z$ in $\Sg-\Sg_0$ are called (oriented) \emph{characteristic curves} of $\Sg$. If $\Sg$ is a $C^2$ surface then $Z$ is a $C^1$ vector field and the maximal characteristic curves provide a foliation of $\Sg-\Sg_0$.

\subsection{Mean curvature and area-stationary surfaces}
\label{subsec:stationary}

The \emph{volume} of a set $\Om\subeq\stres$ is the Riemannian volume $V(\Om)$ in $(\stres,g)$. The (sub-Riemannian) \emph{area} of a $C^1$ surface $\Sg$ in $\stres$ is defined by
\begin{equation}
\label{eq:area}
A(\Sg):=\int_\Sg\mnh\,da,
\end{equation}
where $N$ is a unit normal vector field to $\Sg$ in $(\stres,g)$ and $da$ is the area element in $(\stres,g)$.  

We say that a surface $\Sg$ is \emph{area-stationary} if it is a critical point of the area functional \eqref{eq:area} for any compactly supported variation $\Sg_\alpha$ of $\Sg$. If $\Sg$ encloses a set $\Om$ and has critical area under variations $\Sg_\alpha$ for which the enclosed sets $\Om_\alpha$ have the same volume, then we say that $\Sg$ is \emph{volume-preserving area-stationary}. If $\Sg$ is an orientable area-stationary (resp. volume-preserving area-stationary) $C^2$ surface, then the mean curvature $H$ vanishes (resp. is constant), see \cite[Sect.~2]{chmy}, \cite[Sect.~4]{hr1} and \cite[Sect.~4.1]{hr2}.  Here the (sub-Riemannian) \emph{mean curvature} of $\Sg$ is the quantity
\begin{equation}
\label{eq:mc}
-2H(p):=(\divv_\Sg\nuh)(p), \quad\text{for any } p\in\Sg-\Sg_0,
\end{equation}
where $\divv_\Sg$ stands for the divergence relative to $\Sg$ in $(\stres,g)$. Moreover, we can reason as in \cite[Cor.~4.11]{rr2}, see also \cite[Sect.~4.1]{hr2}, to get the identity
\begin{equation}
\label{eq:eso}
\frac{d}{d\alpha}\bigg|_{\alpha=0}\big(A(\Sg_\alpha)-2H\,V(\Om_\alpha\big))=0,
\end{equation}
for any variation of $\Sg$ by surfaces $\Sg_\alpha$ enclosing sets $\Om_\alpha$.

\section{Pansu spheres and foliations of vertical solid tubes}
\label{sec:foliation}

We begin this section by recalling some facts about the family of constant mean curvature spheres in $(\stres,g_h)$ introduced in \cite[Sect.~5.1]{hr1}. After that, we will apply vertical translations to a fixed closed half-sphere to produce a foliation of the associated solid tube around the vertical axis. These foliations will be employed to construct the calibration vector fields in the proofs of our main results in Sections~\ref{sec:plateau} and \ref{sec:isoperimetric}.

For any number $\la\geq 0$ we define parametrically the set
\begin{equation}
\label{eq:sla}
\s_\la:=\{\ga_\theta(s)\,;\,\theta\in[0,2\pi), s\in[0,\pi/\sqrt{1+\la^2}]\},
\end{equation}
where $\ga_\theta(s):=(x_1(s),y_1(s),x_2(\theta,s),y_2(\theta,s))$ is the curve in $\stres$ with coordinate functions
\begin{equation}
\label{eq:geocoor}
\begin{split}
x_{1}(s):&=\cos(\la s)\,\cos(\sqrt{1+\la^2}\,s)+
\frac{\la}{\sqrt{1+\la^2}}\,\sin(\la s)\,\sin(\sqrt{1+\la^2}\,s),
\\
y_{1}(s):&=-\sin(\la s)\,\cos(\sqrt{1+\la^2}\,s)+
\frac{\la}{\sqrt{1+\la^2}}\,\cos(\la s)\,\sin(\sqrt{1+\la^2}\,s),
\\
x_{2}(\theta,s):&=\frac{1}{\sqrt{1+\la^2}}\,\sin(\sqrt{1+\la^2}\,s)\,
\cos(\theta-\la s),
\\
y_{2}(\theta,s):&=\frac{1}{\sqrt{1+\la^2}}\,\sin(\sqrt{1+\la^2}\,s)\,
\sin(\theta-\la s).
\end{split}
\end{equation}
It comes from \cite[Sect.~3]{hr1} that $\ga_\theta$ is the \emph{geodesic} in $(\stres,g_h)$ of curvature $\la$ leaving from the \emph{south pole} $e:=(1,0,0,0)$ with initial velocity $\cos(\theta)\,E_1(e)+\sin(\theta)\,E_2(e)$. Note that $\ga_\theta(\pi/\sqrt{1+\la^2})\in L$ and does not depend on $\theta$. We denote this point by $e^\la$ and we call it the \emph{north pole} of $\s_\la$. For $\la=0$ any curve $\ga_\theta(s)$ with $s\in[0,\pi]$ parameterizes an arc between $e$ and $-e$ of a great circle in $\stres$. Indeed, we have $\s_0=\stres\cap\{y_1=0\}$, which is a totally geodesic 2-sphere in $(\stres,g)$. 

In the next result we gather some properties of $\s_\la$ that will be useful in the sequel.

\begin{proposition}
\label{prop:spheres}
For any $\la\geq0$ the following facts hold:
\begin{itemize}
\item[(i)] $\s_\la$ is an embedded $2$-sphere of class $C^2$ with singular set $\{e,e^\la\}$,
\item[(ii)] $\s_\la-\{e,e^\la\}$ is a $C^\infty$ surface,
\item[(iii)] there is a $C^1$ unit normal vector field $N$ on $\s_\la$ in $(\stres,g)$, which is $C^\infty$ outside the poles, and such that $\s_\la$ has constant mean curvature $\la$ in $(\stres,g_h)$ with respect to $N$,
\item[(iv)] the curves $\ga_\theta(s)$ with $s\in (0,\pi/\sqrt{1+\la^2})$ are characteristic curves of $\s_\la$,
\item[(v)] the surface $\s_\la$ is volume-preserving area-stationary. The sphere $\s_0$ is also area-stationary,
\item[(vi)] $\s_\la$ is a rotationally invariant surface. For any $\la>0$ the generating curve of $\s_\la$ in spherical coordinates $(\omega,\tau)$ is the union of two graphs $\tau=\mu_\la(\omega)$ and $\tau=\tau_\la-\mu_\la(\omega)$, where $\tau_\la:=\pi\,\big(1-\frac{\la}{\sqrt{1+\la^2}}\big)$ and $\mu_\la:[0,\arctan(1/\la)]\to\rr$ is a continuous increasing function with $\mu_\la(0)=0$ and $\mu_\la(\arctan(1/\la))=\tau_\la/2$.
\item[(vii)] the volume function $m:(0,+\infty)\to (0,V(\stres)/2)$ defined by $m(\la):=V(\be_\la)$ is decreasing, where $\be_\la$ is the topological closed $3$-ball obtained by vertical rotations of the closed set in $\sph^2_+$ enclosed between the generating curve of $\s_\la$ and the arc $0\leq\tau\leq\tau_\la$.
\end{itemize}
\end{proposition}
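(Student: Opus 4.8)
The plan is to read (i)--(v) off the construction of $\s_\la$ in \cite[Sect.~5.1]{hr1} and to prove (vi) and (vii) directly from the parametrization \eqref{eq:geocoor}.

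\emph{Properties (i)--(v).} The formulas \eqref{eq:geocoor} are precisely the integration of the sub-Riemannian geodesic equations of curvature $\la$ carried out in \cite[Sect.~3]{hr1}, so each $\ga_\theta$ is a geodesic of curvature $\la$ leaving $e$ with horizontal velocity $\cos(\theta)\,E_1(e)+\sin(\theta)\,E_2(e)$. Since $\ga_\theta(0)=e$ and $\ga_\theta(\pi/\sqrt{1+\la^2})=e^\la$ do not depend on $\theta$, the map $(\theta,s)\mapsto\ga_\theta(s)$ collapses the two sides $s\in\{0,\pi/\sqrt{1+\la^2}\}$ of the parameter rectangle to the poles and is injective on the open part (this follows from the cylindrical coordinates computed in the next step), giving the embedded $2$-sphere of (i); smoothness of the parametrization off the poles yields (ii), while the $C^2$ but not $C^3$ regularity at $e,e^\la$ and the identification of the singular set with $\{e,e^\la\}$ follow from the local analysis of constant mean curvature surfaces near isolated singular points in \cite[Sect.~3]{chmy}, as in \cite{hr1}. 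For (iii) I take the unit normal $N$ induced by the parametrization and invoke the computation $\divv_{\s_\la}\nuh=-2\la$ of \cite{hr1}. Property (iv) is then immediate: for $s$ in the open interval the point $p=\ga_\theta(s)$ is regular, and $\ga_\theta'(s)$ is horizontal (a geodesic is a horizontal curve) and tangent to $\s_\la$, hence it lies in the line $T_p\s_\la\cap\h_p$, which is spanned by $Z$; thus the $\ga_\theta$ are characteristic curves up to reparametrization. Finally, for (v) a $C^2$ surface of constant mean curvature whose singular set has vanishing length is volume-preserving area-stationary by the first variation formulas of \cite[Sect.~4]{hr1} and \cite[Sect.~4.1]{hr2}, the two isolated singular points producing no boundary contribution; for $\la=0$ one has $H=0$ and $\s_0$ is area-stationary without a volume constraint.

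\emph{Property (vi).} Writing $u:=\sqrt{1+\la^2}\,s$, a direct manipulation of \eqref{eq:geocoor} gives $z_1=x_1+iy_1=\exp(-i\la s)\,\big(\cos u+i\,\tfrac{\la}{\sqrt{1+\la^2}}\,\sin u\big)$, so that $|z_1|^2=1-\tfrac{1}{1+\la^2}\sin^2 u$ and $\arg(z_1)=-\tfrac{\la}{\sqrt{1+\la^2}}\,u+\psi(u)$, where $\psi(u)$ is the continuous argument of $\cos u+i\,\tfrac{\la}{\sqrt{1+\la^2}}\sin u$ with $\psi(0)=0$. Since $\omega=d_L=\arccos|z_1|$, the identity $\sin\omega=\tfrac{1}{\sqrt{1+\la^2}}\,|\sin u|$ shows that $\omega$ increases from $0$ to $\arctan(1/\la)$ on $u\in[0,\pi/2]$ and decreases back to $0$ on $[\pi/2,\pi]$; on the first interval $\tau=\arg(z_1)$ defines an increasing graph $\tau=\mu_\la(\omega)$ with $\mu_\la(0)=0$ and $\mu_\la(\arctan(1/\la))=\tau_\la/2$, and the reflection $u\mapsto\pi-u$ (which fixes $\omega$ and sends $\tau$ to $\tau_\la-\tau$) produces the second graph $\tau=\tau_\la-\mu_\la(\omega)$. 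Rotational invariance is the identity $r_\phi(\ga_\theta(s))=\ga_{\theta+\phi}(s)$, clear from the $z_2$-coordinate in \eqref{eq:geocoor}.

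\emph{Property (vii).} In cylindrical coordinates the round metric of $\stres$ is $\mathrm{diag}(1,\cos^2\omega,\sin^2\omega)$, so $dv=\cos\omega\,\sin\omega\,d\omega\,d\tau\,d\vartheta$. As $\be_\la$ is the $\{r_\vartheta\}$-orbit of the region $\{\,\mu_\la(\omega)\leq\tau\leq\tau_\la-\mu_\la(\omega),\ 0\leq\omega\leq\arctan(1/\la)\,\}$, integrating out $\vartheta$ gives
\begin{equation*}
m(\la)=\pi\int_0^{\arctan(1/\la)}\sin(2\omega)\,\big(\tau_\la-2\mu_\la(\omega)\big)\,d\omega.
\end{equation*}
The substitution back to $u$, using $\sin(2\omega)\,d\omega=\tfrac{1}{1+\la^2}\sin(2u)\,du$ and $b:=\la/\sqrt{1+\la^2}\in(0,1)$ (so $1/(1+\la^2)=1-b^2$), recasts this as $m=\pi\,(1-b^2)\,G(b)$ with $G(b)=\int_0^{\pi/2}\sin(2u)\,\big[\pi(1-b)+2bu-2\arctan(b\tan u)\big]\,du$, a function of the single increasing parameter $b$. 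I expect the monotonicity $m'(\la)<0$ to be the main obstacle: geometrically both the radial extent $\arctan(1/\la)$ and the angular width $\tau_\la$ decrease with $\la$, which suggests the nested inclusion $\be_{\la_2}\subset\be_{\la_1}$ for $\la_1<\la_2$, but a clean proof will come from evaluating $G$ and checking $\tfrac{d}{db}[(1-b^2)G(b)]<0$ on $(0,1)$; the endpoint values $m(0^+)=V(\stres)/2=\pi^2$ and $m(+\infty)=0$ then identify the range.
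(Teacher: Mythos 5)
Your handling of (i)--(v) coincides with the paper's, which simply quotes \cite[Prop.~5.1, Re.~5.2]{hr1}; the extra details you supply (horizontality of the geodesics for (iv), the first variation formulas for (v)) are consistent with how those facts are established in that reference. For (vi) you take a genuinely different and more self-contained route: the paper imports from \cite[Thm.~6.4]{hr1} the explicit generating curve $\tau=\pm\rho_\la(\om)$, with $\rho_\la(\om)=\arccos(\la\tan\om)-\tfrac{\la}{\sqrt{1+\la^2}}\arccos(\sqrt{1+\la^2}\sin\om)$, whereas you derive the two graphs directly from \eqref{eq:geocoor}. Your computation is correct: indeed $z_1=e^{-i\la s}\bigl(\cos u+i\,\tfrac{\la}{\sqrt{1+\la^2}}\sin u\bigr)$, hence $\sin\om=|\sin u|/\sqrt{1+\la^2}$, and your reflection and rotation identities check out. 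The one step you assert without verification is that $\tau$ increases with $\om$; this needs $\psi'(u)=b/|z_1|^2\geq b$, where $b:=\la/\sqrt{1+\la^2}$, which is a one-line check, so I regard it as a minor omission rather than a gap.

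The genuine gap is in (vii), whose entire content is the monotonicity of $m(\la)=V(\be_\la)$, and which you do not prove: after correctly reducing $m$ to $\pi\,(1-b^2)\,G(b)$ you write that you ``expect'' $\tfrac{d}{db}\bigl[(1-b^2)G(b)\bigr]<0$ and that a proof ``will come from evaluating $G$''; the nested inclusion $\be_{\la_2}\subset\be_{\la_1}$ is likewise only suggested, not established. As written, (vii) remains a conjecture. The gap does close along exactly the line you set up: integration by parts and the substitution $t=\tan u$ give $\int_0^{\pi/2}\sin(2u)\arctan(b\tan u)\,du=\tfrac{\pi b}{2(1+b)}$, hence $G(b)=\pi\bigl(1-\tfrac{b}{2}-\tfrac{b}{1+b}\bigr)=\tfrac{\pi(1-b)(2+b)}{2(1+b)}$ and $m=\tfrac{\pi^2}{2}\,(1-b)^2(2+b)$, whose derivative in $b$ is $-\tfrac{3\pi^2}{2}\,(1-b^2)<0$; since $b$ is increasing in $\la$, and $m\to\pi^2=V(\stres)/2$ as $b\to 0$ while $m\to 0$ as $b\to 1$, this completes (vii) and even yields a closed formula for $V(\be_\la)$. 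By contrast, the paper needs no integration at all: it observes that for each fixed $\om$ the map $\la\mapsto\rho_\la(\om)$ is decreasing, so the planar regions bounded by the generating curves, and hence the balls $\be_\la$, are nested in $\la$, and monotonicity of the volume is immediate. You should either carry out the evaluation of $G$ as above, or switch to this nesting argument, which also makes your heuristic inclusion $\be_{\la_2}\subset\be_{\la_1}$ rigorous.
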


\begin{proof}
Statements (i)-(v) were proved in \cite[Prop.~5.1, Re.~5.2]{hr1}. Statement (vi) comes from the description of $\s_\la$ in the proof of \cite[Thm.~6.4 (iii)]{hr1}. Indeed, the function $\mu_\la(\omega)$ is explicitly given by $\tau_\la/2-\rho_\la(\omega)$, where
\[
\rho_\la(\omega):=\arccos(\la\,\tan\,\omega)-\frac{\la}{\sqrt{1+\la^2}}\,\arccos(\sqrt{1+\la^2}\,\sin\,\omega), \quad\omega\in[0,\arctan(1/\la)].
\]
Hence the generating curve of $\s_\la$ in $\sph^2_+$ coincides, up to a translation along the $\tau$-axis, with the set $\mathcal{G}_\la$ of equation $\tau=\pm\rho_\la(\omega)$ with $\omega\in[0,\arctan(1/\la)]$. 
For fixed $\la_0>0$ and $\omega\in[0,\arctan(1/\la_0))$ we have that $\omega\in[0,\arctan(1/\la))$ for any $\la\in(0,\la_0)$, and the function $\la\in(0,\la_0)\mapsto\rho_\la(\omega)$ is decreasing. This entails that the closed set in $\sph^2_+$ enclosed between $\mathcal{G}_\la$ and the arc $-\tau_\la/2\leq\tau\leq\tau_\la/2$ is decreasing with respect to $\la$. Therefore, the function $\la\mapsto V(\mathcal{B}_\la)$ is decreasing in $(0,+\infty)$. This shows statement (vii) and finishes the proof.
\end{proof}

For our next result we need to introduce some notation. We define $\be_0$ as the closure in $\stres$ of the connected component of $\stres-\s_0$ that contains all the sets $\s_\la-\{e\}$ with $\la>0$. For any $\la\geq 0$ the \emph{equatorial circle} of $\s_\la$ is the curve
\[
C_\la:=\{\ga_\theta\big(\pi/(2\sqrt{1+\la^2})\big)\,;\,\theta\in [0,2\pi)\},
\] 
which is obtained by applying the rotations $r_\theta$ in \eqref{eq:rtheta} to the point in the generating curve of $\s_\la$ with spherical coordinates $(\arctan(1/\la),\tau_\la/2)$ (when $\la=0$ we can take $(\pi/2,\pi/2)$). Let $\sph^2_\la$ be the rotationally invariant and totally geodesic $2$-sphere in $(\stres,g)$ with $C_\la\sub\sph^2_\la$ (for $\la=0$ we have $\sph^2_\la=\stres\cap\{x_1=0\}$). We say that  $D_\la:=\be_\la\cap\sph^2_\la$ is the \emph{equatorial disk} of $\be_\la$. Clearly $\ptl D_\la=C_\la$. The $2$-sphere $\sph^2_\la$ separates $\stres$ into two closed regions $(\stres_\la)^-$ and $(\stres_\la)^+$, that we label in such a way that $e\in(\sph_\la^3)^+$. For any set $\Om\subeq\stres$ we denote
\begin{equation}
\label{eq:halves}
\Om_\la^-:=\Om\cap (\stres_\la)^- \ \text{ and } \ \Om_\la^+:=\Om\cap (\stres_\la)^+.
\end{equation}
For any $\la>0$ the \emph{vertical solid tube} around $L$ associated to $\s_\la$ is the set
\begin{equation}
\label{eq:wla}
\mathcal{W}_\la:=\{p\in\stres\,;\,d_L(p)\leq\arctan(1/\la)\}.
\end{equation}
From \eqref{eq:clifford} it is clear that $\ptl\mathcal{W}_\la=\mathcal{T}_\rho$, where $\rho\in (0,1)$ satisfies $\arccos(\rho):=\arctan(1/\la)$. Note that $\s_\la$ and $\mathcal{T}_\rho$ are tangent surfaces along the circle $C_\la$. We also denote
\begin{equation*}
\label{eq:w0}
\mathcal{W}_0=\stres-C_0,
\end{equation*}
where $C_0=\{0\}\times\sph^1$ is the equatorial circle of $\s_0$.

Now, we are ready to prove the following statement.

\begin{proposition}
\label{prop:foliation}
Consider a closed half-sphere $\s_\la^+$ for some $\la\geq 0$. For any $t\in [0,2\pi)$, let $\s_\la^+(t):=\varphi_t(\s_\la^+)$, where $\varphi_t$ is the vertical translation defined in \eqref{eq:verttrans}. Then, we have:
\begin{itemize}
\item[(i)] $\s_\la^+(t)$ is a rotationally invariant surface with boundary, having constant mean curvature $\la$ in $(\stres,g_h)$ and one singular point  $e_t:=\varphi_t(e)$,
\item[(ii)] for $\la>0$ the family $\{\s_\la^+(t)\}_{t\in [0,2\pi)}$ produces a foliation of $\mathcal{W}_\la$,
\item[(iii)] the family $\{\s_0^+(t)-C_0\}_{t\in [0,2\pi)}$ provides a foliation of $\mathcal{W}_0$,
\item[(iv)] there is a unit horizontal $C^\infty$ vector field $X^+_\la$ on $\mathcal{W}_\la-L$ whose restriction to $\s_\la^+$ equals the horizontal Gauss map, and having Riemannian divergence
\[
\divv X^+_\la=-2\la \ \text{ on } \ \mathcal{W}_\la-L.
\]
\end{itemize}
The above properties remain valid if we replace $\s_\la^+$ with $\s_\la^-$; the only difference is that the unique singular point of $\s_\la^-(t):=\varphi_t(\s_\la^-)$ is $\varphi_t(e^\la)$.
\end{proposition}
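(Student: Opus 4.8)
\emph{Part (i).} The plan is to exploit that every vertical translation $\var_t$ is a left translation of the bi-invariant metric, hence an orientation-preserving isometry of both $(\stres,g)$ and $(\stres,g_h)$ preserving the horizontal distribution. Such a map carries the unit normal, the horizontal Gauss map, the characteristic field and the singular set of a surface to those of its image, and it preserves the mean curvature. Since $\s_\la^+$ is rotationally invariant with constant mean curvature $\la$ and, by Proposition~\ref{prop:spheres}(i), its only singular point is the south pole $e$ (the north pole $e^\la$ lies in $\s_\la^-$), the image $\s_\la^+(t)=\var_t(\s_\la^+)$ is a rotationally invariant surface of constant mean curvature $\la$ with unique singular point $e_t=\var_t(e)$. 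Rotational invariance survives because, in the cylindrical coordinates of Section~\ref{subsec:cylcoor}, $r_\theta$ and $\var_t$ act by independent shifts of $\vartheta$ and of $(\tau,\vartheta)$, so they send rotational orbits to rotational orbits.

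\emph{Parts (ii) and (iii).} I would argue in cylindrical coordinates $(\om,\tau,\vartheta)$, recalling that $\om=d_L$ is intrinsically defined. By Proposition~\ref{prop:spheres}(vi) the generating curve of $\s_\la^+$ in $\sph^2_+$ is the graph $\tau=\mu_\la(\om)$ over $\om\in[0,\arctan(1/\la)]$ (and for $\la=0$ one has $\mu_0\equiv 0$ over $\om\in[0,\pi/2]$), and from Section~\ref{subsec:cylcoor} the map $\var_t$ transforms this generating curve into $\tau=\mu_\la(\om)+t$ modulo $2\pi$. Since $\mathcal{W}_\la=\{\om\leq\arctan(1/\la)\}$, every point of $\mathcal{W}_\la-L$ has a coordinate $\om\in(0,\arctan(1/\la)]$ and lies on exactly one leaf, the one with $t\equiv\tau-\mu_\la(\om)\ (\mathrm{mod}\ 2\pi)$, while the axis $L=\{\om=0\}$ is swept out by the distinct singular points $e_t$. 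This injectivity and surjectivity give the partition of $\mathcal{W}_\la$ claimed in (ii). For (iii) the locus $\om=\pi/2$ is exactly $C_0$, the common boundary of all leaves $\s_0^+(t)$; after deleting it the regular leaves $\s_0^+(t)-C_0$ become pairwise disjoint and cover $\stres-C_0=\mathcal{W}_0$.

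\emph{Part (iv).} I would define $X^+_\la$ on $\mathcal{W}_\la-L$ as the horizontal Gauss map $\nuh$ of the unique leaf through each point; this is a well-defined unit horizontal field by (ii)--(iii), it restricts to $\nuh$ on $\s_\la^+$, and it is $C^\infty$ because the leaves are $C^\infty$ off their poles (Proposition~\ref{prop:spheres}(ii)) and vary smoothly with $t$. To compute the divergence I would evaluate at each point in the orthonormal frame $\{Z,\nuh,T\}$, with $Z=J(\nuh)$, giving
\[
\divv X^+_\la=\escpr{D_Z\nuh,Z}+\escpr{D_{\nuh}\nuh,\nuh}+\escpr{D_T\nuh,T}.
\]
The middle term vanishes since $|\nuh|\equiv 1$. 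The crucial observation is that, because each $\var_t$ is an \emph{ambient} isometry mapping leaves to leaves and preserving $\nuh$, the field $X^+_\la$ is invariant under the flow of $T$; hence $[T,X^+_\la]=0$ and $D_T\nuh=D_{\nuh}T=J(\nuh)=Z$, so that $\escpr{D_T\nuh,T}=\escpr{Z,T}=0$. Thus $\divv X^+_\la=\escpr{D_Z\nuh,Z}$. Writing the remaining unit tangent $S=\escpr{N,T}\nuh-\mnh\,T$ to the leaf, a short computation using $|\nuh|\equiv 1$ and \eqref{eq:jota} shows that $\escpr{D_S\nuh,S}=0$, whence $\divv_\Sg\nuh=\escpr{D_Z\nuh,Z}$ as well. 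Comparing with the constant mean curvature identity $\divv_\Sg\nuh=-2\la$ from \eqref{eq:mc} and Proposition~\ref{prop:spheres}(iii) yields $\divv X^+_\la=-2\la$ on $\mathcal{W}_\la-L$. The statement for $\s_\la^-$ follows verbatim, its generating curve $\tau=\tau_\la-\mu_\la(\om)$ being again a graph over $\om$, the only change being that the singular point of each leaf is now the north pole $\var_t(e^\la)$.

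\emph{Main obstacle.} I expect the principal difficulty to be the geometric bookkeeping in (ii)--(iii): verifying that the translated half-spheres are pairwise disjoint and exhaust $\mathcal{W}_\la$, and that the resulting $X^+_\la$ is genuinely $C^\infty$ up to the boundary torus $\ptl\mathcal{W}_\la$ and near the axis. By contrast, the divergence identity, although it is the heart of the calibration, reduces to the short frame computation above once one exploits that vertical translations are ambient Riemannian isometries.
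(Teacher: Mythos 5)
Your proof is correct, and for parts (i)--(iii) it is essentially the paper's own argument: rotational invariance comes from the commutation $r_\theta\circ\var_t=\var_t\circ r_\theta$, and the foliations come from the translated generating curves $\tau=\mu_\la(\om)+t$ together with rotational invariance of $\mathcal{W}_\la$; your coordinate bookkeeping (the unique leaf $t\equiv\tau-\mu_\la(\om)\pmod{2\pi}$ off $L$, the poles $e_t$ sweeping $L$) is exactly the paper's reduction to a foliation of $\sph^2_+$ followed by rotations, just written out pointwise. The one genuine difference is the divergence computation in (iv). The paper splits $\divv X^+_\la$ into the tangential divergence along the leaf, which equals $-2\la$ by \eqref{eq:mc}, plus the normal term $\escpr{D_{N_t}X^+_\la,N_t}$, and kills the latter by a short direct computation using \eqref{eq:normal}, unit length of $X^+_\la$ and \eqref{eq:jota}. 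You instead compute in the frame $\{Z,\nuh,T\}$: the $\nuh$-term dies by unit length, the $T$-term dies via the flow-invariance $[T,X^+_\la]=0$ (the leaves and their normals are equivariant under $\var_s$, whence $D_TX^+_\la=D_{X^+_\la}T=J(X^+_\la)=Z\perp T$), and the surviving term $\escpr{D_ZX^+_\la,Z}$ is identified with the full surface divergence through the auxiliary vanishing $\escpr{D_S\nuh,S}=0$ for the second tangent $S=\escpr{N,T}\,\nuh-\mnh\,T$; that auxiliary step does check out, since $\escpr{D_S\nuh,S}=-\mnh\,\escpr{D_S\nuh,T}=\mnh\,\escpr{\nuh,J(S)}=\mnh\,\escpr{N,T}\,\escpr{\nuh,Z}=0$. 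Both routes rest on the same three facts ($|X^+_\la|\equiv1$, $D_TT=0$, and $\escpr{J(X),X}=0$); yours makes explicit the conceptual point that the calibration is invariant under the vertical flow, so that only the characteristic direction can contribute to the divergence, at the cost of one extra verification, while the paper's normal-term computation is shorter and self-contained. A minor remark for both versions: the smoothness of $X^+_\la$ on $\mathcal{W}_\la-L$ is asserted rather than proved in your write-up, but the paper is equally brief on this point, so this is not a gap relative to the paper's standard of rigor.
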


\begin{proof}
By Proposition~\ref{prop:spheres} (iii) there is a $C^1$ unit normal $N_0$ on $\s_\la^+(0)=\s_\la^+$ in $(\stres,g)$, which is $C^\infty$ on $\s_\la^+-\{e\}$, and having associated mean curvature $\la$ in $(\stres,g_h)$. For any $t\in[0,2\pi)$ we define $N_t:=\var_t\circ N_0\circ\var_t^{-1}$. This is a Riemannian unit normal on $\s_\la^+(t)$ since $\var_t$ is an isometry of $(\stres,g_h)$. Moreover, $\s_\la^+(t)$ has constant mean curvature $\la$ in $(\stres,g_h)$ with respect to $N_t$ and only one singular point at $e_t=\var_t(e)$. As $\s_\la^+$ is rotationally invariant then
\[
r_\theta(\s_\la^+(t))=(r_\theta\circ\var_t)(\s_\la^+)=(\var_t\circ r_\theta)(\s_\la^+)=\var_t(\s_\la^+)=\s_\la^+(t),
\] 
for any rotation $r_\theta$ in \eqref{eq:rtheta}, so that $\s_\la^+(t)$ is also rotationally invariant. This proves (i).

For $\la>0$ we know by Proposition~\ref{prop:spheres} (vi) that the generating curve of $\s_\la^+(0)$ in spherical coordinates is the graph $(\omega,\mu_\la(\omega))$ where $\omega\in [0,\arctan(1/\la)]$. Thus, the graph $(\omega,\mu_\la(\omega)+t)$ determines the generating curve of $\s_\la^+(t)$. This family of curves gives a foliation of the set $\{p\in\sph^2_+\,;\,d_L(p)\leq\arctan(1/\la)\}$. On the other hand, the generating curves of the half-spheres $\s_0^+(t)$ are circle arcs of spherical coordinates $\tau=t$, so that they produce a foliation of $\sph^2_+-C_0$ (note that $C_0=\ptl\s_0^+(t)$ for any $t\in [0,2\pi)$). Now the proofs of (ii) and (iii) come by applying vertical rotations $r_\theta$ and having in mind that $\mathcal{W}_\la$ is rotationally invariant.

Finally, we will construct the vector field in (iv). Observe that the set $\{e_t\,;\,t\in[0,2\pi)\}$ of the singular points of $\s_\la^+(t)$ parameterizes the vertical axis $L$. From statements (ii) and (iii) we can define a vector field $N_\la$ on $\mathcal{W}_\la$ whose restriction to $\s_\la^+(t)$ equals the unit normal $N_t$. This vector field is $C^1$ on $\mathcal{W}_\la$ and $C^\infty$ on $\mathcal{W}_\la-L$. Let $X^+_\la:=\frac{(N_t)_h}{|(N_t)_h|}$, which is a horizontal unit $C^\infty$ vector field on $\mathcal{W}_\la-L$ such that $X^+_\la=(\nu_t)_h$ along $\s_\la^+(t)-\{e_t\}$. Let us compute $\divv X^+_\la$. For $p\in\mathcal{W}_\la-L$ we consider the unique $t\in[0,2\pi)$ such that $p\in\s_\la^+(t)$. Note that
\[
(\divv X^+_\la)(p)=\big(\!\divv_{\s_\la^+(t)}(\nu_t)_h\big)(p)+\escpr{D_{N_t(p)}X^+_\la,N_t(p)}=-2\la+\escpr{D_{N_t(p)}X^+_\la,N_t(p)},
\]
where we have used \eqref{eq:mc} and that $\s_\la^+(t)$ has mean curvature $\la$ with respect to $N_t$. Next, we see that the last summand above vanishes. From \eqref{eq:normal} and the definition of $X^+_\la$, we infer
\begin{align*}
\escpr{D_{N_t(p)}X^+_\la,N_t(p)}&=|(N_t)_h|(p)\,\escpr{D_{N_t(p)}X^+_\la,X^+_\la(p)}+\escpr{N_t(p),T(p)}\,\escpr{D_{N_t(p)}X^+_\la,T(p)}
\\
&=-\escpr{N_t(p),T(p)}\,\escpr{X^+_\la(p),D_{N_t(p)}T}
\\
&=-|(N_t)_h|(p)\,\escpr{N_t(p),T(p)}\,\escpr{(\nu_t)_h(p),Z_t(p)}=0,
\end{align*}
as we claimed. All the previous arguments also hold when we replace $\s_\la^+$ with $\s_\la^-$.
\end{proof}

\section{The Plateau problem for the circle $C_0$}
\label{sec:plateau}

In this section we use a calibration argument to show that the closed half-sphere $\s_0^+$ minimizes the area in $(\stres,g_h)$ among compact surfaces with boundary $C_0=\{0\}\times\sph^1$. Related minimization results in different sub-Riemannian contexts are found in \cite[Prop.~6.2]{chmy}, \cite[Thm.~2.1]{bscv}, \cite[Thm.~5.3]{rr2} and \cite[Lem.~3.1(iv)]{r2}. Our approach requires approximation since the calibrated forms are not defined along the circles $C_0$ and $L$. Moreover, by analyzing the behaviour of minimizers near $C_0$, we prove that the half-spheres $\s_0^+(t)=\var_t(\s_0^+)$ are the unique area-minimizing $C^2$ surfaces with fixed boundary $C_0$.

\begin{theorem}
\label{th:main}
If $\Sg\subset\sph^3$ is a compact and orientable $C^1$ surface with $\ptl\Sg=C_0$, then $A(\Sg)\geq A(\s_0^+)$. Moreover, if equality holds and $\Sg$ is $C^2$, then $\Sg=\s_0^+(t)$ for some $t\in[0,2\pi)$.
\end{theorem}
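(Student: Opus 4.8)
The plan is to calibrate $\s_0^+$ with the vector field $X_0^+$ produced by Proposition~\ref{prop:foliation} for $\la=0$. This is a horizontal unit $C^\infty$ field on $\stres-(C_0\cup L)$ satisfying $\divv X_0^+=0$ and $X_0^+=\nuh$ along $\s_0^+$. Because $X_0^+$ is horizontal and unit, any $C^1$ surface with unit normal $N$ obeys the pointwise inequality $\escpr{X_0^+,N}=\escpr{X_0^+,N_h}\leq\mnh$, with equality precisely when $\nuh=X_0^+$. Orienting $\Sg$ so that the boundary orientation it induces on $C_0$ matches the one coming from $\s_0^+$, the two surfaces bound a region $\Om\subset\stres$, since $\stres$ is simply connected.

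To obtain $A(\Sg)\geq A(\s_0^+)$, note that if $X_0^+$ extended smoothly across $\Om$ the divergence theorem would give $\int_\Sg\escpr{X_0^+,N}\,da=\int_{\s_0^+}\escpr{X_0^+,N}\,da$, and by $X_0^+=\nuh$ together with \eqref{eq:normal} and \eqref{eq:area} the right-hand side equals $A(\s_0^+)$; the calibration inequality then closes the argument. The genuine difficulty is that $X_0^+$ does not extend to $C_0\cup L$. I would therefore delete the open $\eps$-tubular neighborhoods of $C_0$ and of $L$ from $\Om$, apply the divergence theorem on the truncated region (where $\divv X_0^+=0$), and let $\eps\to 0$. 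The additional boundary terms run over the shrinking tubes, whose areas tend to $0$, so they are controlled by $|X_0^+|=1$ and vanish in the limit; the surface integrals converge since their integrands are bounded by $1$ and $\Sg\cap(C_0\cup L)$ has zero area in $\Sg$. This yields $\int_\Sg\escpr{X_0^+,N}\,da=A(\s_0^+)$ and hence $A(\Sg)\geq A(\s_0^+)$.

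For the equality statement, assume $A(\Sg)=A(\s_0^+)$ with $\Sg$ of class $C^2$. Then the calibration inequality is saturated almost everywhere, so $\nuh=X_0^+$ on the regular set $\Sg-\Sg_0$ off $C_0\cup L$. Thus $\Sg$ and the leaf $\s_0^+(t)$ through each such point share the same horizontal Gauss map, hence the same characteristic direction $Z=J(\nuh)$; in particular $\Sg$ is minimal and its characteristic curves are arcs of the horizontal geodesics $\ga_\theta$ issuing from $C_0$. Invoking the description in \cite[Sect.~4]{hr3}, near $C_0$ the surface $\Sg$ is then swept out by such geodesic segments leaving $C_0$ in the horizontal direction fixed by an angle function $\sg$ along $C_0$, and the half-sphere $\s_0^+(t)$ is exactly the case of constant $\sg$. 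Analyzing $\Sg$ as a function of $\sg$, which is the content of Proposition~\ref{prop:uniqueplateau}, shows $\sg$ must be constant, so $\Sg=\s_0^+(t)$ for some $t\in[0,2\pi)$.

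I expect the principal obstacle to lie in the uniqueness part, namely in ruling out singular curves inside $\Sg_0$, which would disrupt the ruling of $\Sg$ by characteristic geodesics and the resulting dependence on $\sg$. This is where the $C^2$ hypothesis is essential: it permits use of the $C^1$ regularity of singular curves and of the local normal form of a $C^2$ surface near them \cite{chmy,rr2,hr1}, and thereby lets one exclude such curves and verify that the characteristic foliation, and with it the angle function $\sg$, extends coherently across $C_0$. The calibration estimate itself is comparatively routine; the delicate analysis of the singular set is the technical core of the proof.
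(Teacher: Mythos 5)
Your proposal is correct and follows essentially the same path as the paper's proof: the calibration field $X^+_0$ of Proposition~\ref{prop:foliation}, a homological argument producing a $3$-chain with boundary $\Sg-\s_0^+$, an approximation near $C_0\cup L$ (the paper smooths with logarithmic cut-off functions where you truncate by tubes whose boundary tori have vanishing area --- equivalent devices), and, for the equality case, the identical reduction that saturation of the calibration inequality forces $\nuh=X^+_0$, hence that $\Sg$ is ruled near $C_0$ by horizontal geodesics leaving orthogonally from $C_0$, followed by the appeal to Proposition~\ref{prop:uniqueplateau}. One harmless inaccuracy: the half-spheres $\s_0^+(t)$ correspond to the angle function satisfying $\sg'\equiv 1$ (i.e.\ $\sg(\eps)=\eps+\eps_0$ relative to the right-invariant frame $\{E_1,E_2\}$), not to constant $\sg$, and this is what the equality discussion in Proposition~\ref{prop:uniqueplateau} actually forces.
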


\begin{proof}
Recall the notation $\mathcal{W}_0=\sph^3-C_0$. By Proposition~\ref{prop:foliation} (iv) there is a unit horizontal $C^\infty$ vector field $X^+_0$ on $\mathcal{W}_0-L$ such that $\divv X^+_0=0$ and the restriction of $X^+_0$ to $\s_0^+-(C_0\cup\{e\})$ equals the horizontal Gauss map $(\nu_0)_h$. The idea for the proof is to apply the divergence theorem to $X^+_0$ over the set bounded by $\Sg$ and $\s_0^+$. We have the difficulty that $X^+_0$ is defined on $\mathcal{W}_0-L$, whereas $\Sg$ is any surface in $\stres$ with $\ptl\Sg=C_0$. This is solved by means of an approximation argument. 

Starting from a sequence of logarithmic cut-off functions we can find, for any $\eps>0$ small enough, a $C^\infty$ function $\xi_\eps:[0,\pi/2]\to[0,1]$ satisfying:
\begin{itemize}
\item[(i)] $\xi_\eps=0$ in $[0,\eps^2]\cup [\frac{\pi}{2}-\eps^2,\frac{\pi}{2}]$,
\item[(ii)] $\xi_\eps=1$ in $[\eps,\frac{\pi}{2}-\eps]$,
\item[(iii)] $|\xi_\eps'|\leq 2/\eps$ in $[0,\frac{\pi}{2}]$.
\end{itemize}
Consider the rotationally invariant function $f_\eps:\stres\to[0,1]$ given by $f_\eps(p):=\xi_\eps(d_L(p))$. Since $d_L$ is $C^\infty$ on $\mathcal{W}_0-L$ and its gradient in $(\stres,g)$ verifies $|\nabla d_L|=1$, we get:
\begin{itemize}
\item[(a)] $f_\eps\in C^\infty(\stres)$ with $\text{supp}(f_\eps)\sub\mathcal{W}_0-L$,
\item[(b)] $\lim_{\eps\to 0}f_\eps(p)=1$, for any $p\in\mathcal{W}_0-L$,
\item[(c)] $\nabla f_\eps=0$ except for the compact region
\[
M_\eps:=\{p\in\stres\,;\,\eps^2\leq d_L(p)\leq\eps\}\cup\{p\in\stres\,;\,\frac{\pi}{2}-\eps\leq d_L(p)\leq\frac{\pi}{2}-\eps^2\},
\]
where we have $|\nabla f_\eps|\leq 2/\eps$.
\end{itemize}

For any $\eps>0$ small enough we define the vector field $Y_\eps:=f_\eps\,X^+_0$. Clearly $Y_\eps$ is a $C^\infty$ vector field on $\stres$ with $\text{supp}(Y_\eps)\sub\mathcal{W}_0-L$. Let us see that
\begin{equation}
\label{eq:main2}
\lim_{\eps\to 0}\int_{\Om}\divv Y_\eps\,dv=0,
\end{equation}
where $\Om\subeq\sph^3$ and $dv$ stands for the oriented volume form in $(\stres,g)$. To this aim, note that
\[
\divv Y_\eps=f_\eps\,\divv X^+_0+\escpr{\nabla f_\eps,X^+_0}=\escpr{\nabla f_\eps,X^+_0},
\]
since $\divv X^+_0=0$ and $\text{supp}(f_\eps)\sub\mathcal{W}_0-L$. So, we obtain
\[
\left|\int_\Om\divv Y_\eps\,dv\right|\leq\int_\Om|\escpr{\nabla f_\eps,X^+_0}|\,dv\leq\int_\Om|\nabla f_\eps|\,dv\leq\frac{2\,V(M_\eps)}{\eps}.
\] 
On the other hand, by taking the cylindrical coordinates in $\stres$ defined by equality \eqref{eq:cylindrical}, it is easy to check that the volume in $(\stres,g)$ of the compact set $\{p\in\stres\,;\,x\leq d_L(p)\leq y\}$ equals $2\pi^2\,(\sin^2y-\sin^2x)$. Hence, we deduce that
\[
\left|\int_\Om\divv Y_\eps\,dv\right|\leq 4\pi^2\,\frac{\cos(2\eps^2)-\cos(2\eps)}{\eps},
\]
so that \eqref{eq:main2} holds from L'H\^opital's rule by letting $\eps\to 0$.

Now, take a compact and orientable $C^1$ surface $\Sg$ in $\stres$ with $\ptl\Sg=C_0=\ptl\s_0^+$ and unit normal $N$. Since the chain $\Sg-\s_0^+$ is a $2$-cycle in $\stres$, then there is a $3$-chain $\Om$ in $\stres$ such that $\Sg-\s_0^+=\ptl\Om$. In $\stres$ we define the family of differential $2$-forms
\[
\zeta^\eps_p(u,w):=(dv)_p(u,w,Y_\eps(p))=f_\eps(p)\,\text{det}(u,w,X^+_0(p)),
\]
for any $p\in\stres$ and any $u,w\in T_p\stres$. It is clear that $\zeta_p^\eps(u,w)\to\zeta_p(u,w):=\text{det}(u,w,X^+_0(p))$ when $\eps\to 0$, for any $p\in\mathcal{W}_0-L$ and $u,w\in T_p\stres$. By applying Stokes theorem
\[
\int_\Om\divv Y_\eps\,dv=\int_\Om d\zeta^\eps=\int_\Sg\zeta^\eps-\int_{\s_0^+}\zeta^\eps.
\]
By using \eqref{eq:main2} and the dominated convergence theorem, we conclude that
\[
\int_{\s_0^+-(C_0\cup\{e\})}\zeta=\int_{\Sg-(C_0\cup L)}\zeta.
\]
On the other hand, from the Cauchy-Schwarz inequality we get 
\begin{equation}
\label{eq:defe}
\zeta_p(u,w)\leq \mnh(p),
\end{equation}
for any $p\in\Sg-(C_0\cup L)$ and any orthonormal basis $\{u,w\}$ in $T_p\Sg$. Moreover, we have
\[
\zeta_p(u,w)=|(N_0)_h|(p),
\] 
whenever $p\in\s_0^+-(C_0\cup\{e\})$ and $\{u,w\}$ is a positive orthonormal basis in $T_p\s_0^+$. Since $C_0\cup L$ does not contribute to the area of $\Sg$ and $\s_0^+$, we conclude by \eqref{eq:area} that
\[
A(\s_0^+)=\int_{\s_0^+}|(N_0)_h|\,da\leq\int_\Sg\mnh\,da=A(\Sg),
\]
which is the desired comparison.

Finally suppose that $A(\Sg)=A(\s_0^+)$, so that $\Sg$ solves the Plateau problem with fixed boundary $C_0$. This implies that $\Sg$ is connected; otherwise, the component of $\Sg$ containing $C_0$ would be a better competitor. Note that $C_0\sub\Sg-\Sg_0$ because the tangent lines along $C_0$ are all vertical. Let $\Sg'$ be the component of $\Sg-(\Sg_0\cup L)$ containing $C_0$. From equality $A(\Sg)=A(\s_0^+)$ and equation~\eqref{eq:defe} we deduce that $\nu_h=X^+_0$ or $\nu_h=-X^+_0$ on $\Sg'-C_0$. Hence the vector fields $\nu_h$ and $Z$ are $C^\infty$ on $\Sg'-C_0$. For a point $p\in\Sg'-C_0$ consider the unique half-sphere $\s^+_0(t)$ with $p\in\s_0^+(t)$, and denote by $\alpha_p$ and $\beta_p$ characteristic curves of $\Sg'$ and $\s^+_0(t)$ through $p$. By reversing $\alpha_p$ if necessary, and using that $(\nu_t)_h=X^+_0$ on $\s_0^+(t)-(C_0\cup L)$, we see that both curves are integral curves of the $C^\infty$ vector field $J(X^+_0)$ defined on $\mathcal{W}_0-L$. Thus $\alpha_p(\pm s)=\beta_p(s)$ around the origin, and so $\alpha_p$ is a horizontal geodesic in $(\stres,g)$. On the other hand, if $p\in C_0$ and $\alpha$ is a characteristic curve of $\Sg'$ with $\alpha(0)=p$, then there is $\delta>0$ such that $\alpha((0,\delta))\subset\Sg'-C_0$. From the previous argument the restriction $\alpha_{|(0,\delta)}$ is a horizontal geodesic in $(\stres,g)$. By continuity this geodesic extends to $0$ as a characteristic curve of $\Sg'$. All this shows that, locally around $C_0$, the surface $\Sg$ is union of horizontal geodesic segments in $(\stres,g)$ leaving orthogonally from $C_0$. Now the proof ends by invoking Proposition~\ref{prop:uniqueplateau} below.
\end{proof}

\begin{proposition}
\label{prop:uniqueplateau}
Let $\Sg\sub\stres$ be a compact, connected, orientable $C^2$ surface with $\ptl\Sg=C_0$ and containing geodesic segments in $(\stres,g)$ leaving orthogonally from $C_0$. If $\Sg\neq\s^+_0(t)$ for any $t\in[0,2\pi)$, then $A(\Sg)>A(\s^+_0)$.
\end{proposition}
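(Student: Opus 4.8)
The starting point is the structure recorded at the end of the proof of Theorem~\ref{th:main}: writing $q(\psi):=(0,e^{i\psi})$ for the points of $C_0$, the surface is swept out near $C_0$ by the horizontal great-circle arcs
\[
\ga_\psi(s):=\big(\sin s\,e^{i\sg(\psi)},\,\cos s\,e^{i\psi}\big),
\]
where $\sg(\psi)$ is the angle for which $\ga_\psi$ meets the vertical axis $L$ at the pole $(e^{i\sg(\psi)},0)$ (equivalently, the initial-direction function studied in \cite[Sect.~4]{hr3}); since $\Sg$ is $C^2$, the function $\sg$ is $C^1$. Each $\ga_\psi$ lies in a single leaf of the foliation of Proposition~\ref{prop:foliation}, namely the half-sphere whose pole it reaches, and $\s_0^+(t)$ is exactly the case $\sg\equiv t$, in which all arcs focus at one pole. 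Writing $F(\psi,s)=\ga_\psi(s)$ and computing $\escpr{\ptl_\psi F,T}=\sg'(\psi)\sin^2 s+\cos^2 s$, a short computation of the induced area element gives
\[
\mnh\,da=\big|\sg'(\psi)\sin^2 s+\cos^2 s\big|\,ds\,d\psi,
\]
so that $A(\Sg)$ is the integral of $|\sg'\sin^2 s+\cos^2 s|$ over the region swept until each arc meets the singular set $\Sg_0$.

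I would then feed this into the calibration of Theorem~\ref{th:main}. As $H_2(\stres)=0$, the divergence argument there gives $A(\Sg)\geq A(\s_0^+)$, with equality if and only if $\nuh=X^+_0$ almost everywhere. Since $\ptl_s\ga_\psi$ is the common characteristic direction of $\Sg$ and of the leaf through $\ga_\psi(s)$, this identity forces the characteristic curves of $\Sg$ to agree, \emph{as oriented curves}, with those of the leaves. The decisive observation is that along a leaf the horizontal Gauss map points towards the pole: hence if some arc $\ga_\psi$ reached $L$ at a \emph{regular} point of $\Sg$ and continued across it, then beyond the pole the leaf orientation would reverse and one would have $\nuh=-X^+_0$ there, contradicting equality. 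This already rules out $\sg'(\psi)>0$, where $\ga_\psi$ runs regularly up to the pole and crosses $L$. Granting that the remaining case $\sg'(\psi)<0$ is also excluded (the main obstacle, treated next), one obtains $\sg'\equiv0$, so all arcs focus at a single pole, and a completeness argument as in the proof of Theorem~\ref{th:main2} yields $\Sg=\s_0^+(t)$.

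The main obstacle, and the reason the $C^2$ hypothesis is needed, is that a geodesic need not run all the way to $L$ before meeting $\Sg_0$. Indeed, when $\sg'(\psi)<0$ the quantity $\escpr{\ptl_\psi F,T}$ vanishes at some $s_*(\psi)\in(0,\pi/2)$, so $\ga_\psi$ meets an \emph{interior} singular point before reaching the pole, and as $\psi$ varies these points sweep a singular curve $\Gamma\subset\Sg_0$. To apply the orientation-reversal argument at the pole, one must first continue $\ga_\psi$ across $\Gamma$. Here I would use that, for a $C^2$ surface, the singular curve $\Gamma$ is of class $C^1$, that $N=\pm T$ along it, and that exactly two characteristic geodesics abut each of its points from opposite sides, see \cite{chmy,rr2,hr1}; since the mean curvature is zero these two geodesics are collinear great circles, so $\ga_\psi$ crosses $\Gamma$ smoothly and proceeds to the pole. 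The conclusion of the previous paragraph then applies and gives $\nuh=-X^+_0$ beyond $L$, contradicting equality; hence $\sg'<0$ cannot occur under equality either, and $\sg'\equiv0$ as claimed.

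This last step is the delicate one precisely because the $C^2$ assumption is essential. For merely $C^1$ competitors the singular set can be extremely irregular, as Cheng, Hwang, Malchiodi and Yang showed in the first Heisenberg group~\cite{chmy2}, so the controlled local picture of $\Sg$ near $\Gamma$—which is what lets me push each arc across the interior singular locus and invoke the pole-orientation reversal—would no longer be available. It is only through the $C^1$ regularity of singular curves and the known local behaviour of $\Sg$ along them that the sign analysis across $\Gamma$ becomes rigorous, and this is exactly the point at which the argument would fail to extend verbatim to $C^1$ surfaces.
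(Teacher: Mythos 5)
Your proposal takes a genuinely different route from the paper. The paper proves Proposition~\ref{prop:uniqueplateau} as a \emph{self-contained} strict inequality, with no minimization hypothesis: singular curves are excluded by a topological argument (a singular curve would force $\Sg$ to continue across it, past $L$, across a second singular curve, and return to $C_0$ with the opposite sign of $\escpr{N,T}$, so that $C_0$ would be an interior curve of the embedded surface, contradicting $\ptl\Sg=C_0$); once singular curves are excluded one has $\sg'\geq 1$ (in the paper's normalization) and the area is computed directly, $A(\Sg)\geq\frac{\pi}{4}\int_0^{2\pi}\sg'(\eps)\,d\eps\geq\pi^2/2$, with equality only for $\s_0^+(t)$. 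You instead assume equality in the calibration of Theorem~\ref{th:main} and try to contradict $\nuh=X^+_0$. This is logically admissible (the comparison half of Theorem~\ref{th:main} is established before the proposition is invoked, so there is no circularity), your parameterization and area element are correct (your $\sg'$ equals the paper's $\sg'-1$, so your conditions $\sg'>0$, $\sg'<0$, $\sg'\equiv 0$ correspond to the paper's $\sg'>1$, $\sg'<1$, $\sg'\equiv 1$), and your treatment of the case $\sg'>0$ (an arc crossing $L$ at a regular point) is essentially right.

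The genuine gap is in the decisive case $\sg'(\psi)<0$: your sign bookkeeping is wrong, and the step fails as written. Two reversals must be tracked separately. First, when a characteristic curve of $\Sg$ crosses the singular curve $\Gamma$, the characteristic field $Z$ of $\Sg$ reverses sign \emph{relative to the arc-length direction} $\dot\ga_\psi$; this is exactly the content of the local structure results you cite ($\escpr{N,T}$ changes sign across $\Gamma$, so $\nuh$ and $Z=J(\nuh)$ flip), and it is what the paper itself exploits. Second, the leaf field $J(X^+_0)$ points toward the pole along \emph{both} meridians of the great circle, so it reverses sign relative to $\dot\ga_\psi$ exactly at $s=\pi/2$, i.e.\ at $L$; whereas $Z$ is continuous across a regular point of $\Sg$ on $L$. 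Consequently, beyond $L$ both reversals have occurred and they cancel: one finds $\nuh=+X^+_0$ there, not $\nuh=-X^+_0$ as you claim, so the contradiction you assert "beyond $L$'' does not exist. The genuine contradiction sits in the band between $\Gamma$ and $L$, namely on $\ga_\psi\big((s_*(\psi),\pi/2)\big)$, where only the flip at $\Gamma$ has occurred: there $Z=-\dot\ga_\psi$ while $J(X^+_0)=+\dot\ga_\psi$, hence $\nuh=-X^+_0$, violating equality. Your argument is therefore repairable — you still need the $C^2$ local structure theory, but only to place points of $\Sg$ immediately on the far side of $\Gamma$ — and the repair shows that the whole continuation of the arcs up to the pole, which you present as the reason for crossing $\Gamma$ in the first place, is both unnecessary and (as a source of a contradiction at $L$) incorrect.
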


\begin{proof}
We parameterize $C_0$ by $\Gamma(\eps):=(0,0,\cos(\eps),\sin(\eps))$ with $\eps\in[0,2\pi]$. From the hypotheses we can find $\delta>0$ such that $\ga_\eps(s)\in\Sg$ for any $(\eps,s)\in[0,2\pi]\times[0,\delta)$. Here $\ga_\eps:[0,2\pi]\to\stres$ is the great circle of $\stres$ described by
\begin{equation}
\label{eq:great}
\gamma_\eps(s):=\cos(s)\,\Gamma(\eps)+\sin(s)\,U(\eps),
\end{equation}
where the initial velocity $U(\eps):=\dot{\ga}_\eps(0)$ is a unit horizontal vector at $\Gamma(\eps)$. Hence, there is a function $\sg(\eps)$ with $\eps\in[0,2\pi]$ such that $U(\eps)=\cos\sigma(\eps)\,E_1(\eps)+\sin\sigma(\eps)\,E_2(\eps)$, where $E_i(\eps):=E_i(\Gamma(\eps))$ for any $i=1,2$. 

We define the map $F(\eps,s):=\ga_\eps(s)$ for $(\eps,s)\in[0,2\pi]\times[0,\pi]$. Observe that $F(\eps,s)\in C_0$ if and only if $s\in\{0,\pi\}$, whereas $F(\eps,s)\in L$ if and only if $s=\pi/2$. Moreover, $F$ is injective on the domains $[0,2\pi)\times[0,\pi/2)$ and $[0,2\pi)\times(\pi/2,\pi]$. The set $\{F(\eps,s)\,;\,(\eps,s)\in[0,2\pi]\times[0,\pi]\}$ corresponds to the case $\kappa=1$ and $\la=0$ of a more general construction studied by the authors~\cite[Sect.~4]{hr3} when the starting curve is the vertical axis $L$ instead of $C_0$. Since $C_0$ is also vertical, our analysis there extends to the present situation. In particular the angle $\sg(\eps)$, that determines $U(\eps)$ modulo $2\pi$, can be chosen as a $C^2$ function. 

Next, we gather some computations in the proof of \cite[Thm.~4.1]{hr3}. The coordinate vector fields associated to $F$ are given by $(\ptl F/\ptl\eps)(\eps,s):=V_\eps(s)$ and $(\ptl F/\ptl s)(\eps,s)=\dot{\ga}_\eps(s)$, where
\[
V_\eps(s)=\frac{v'_\eps(s)}{2}\,J(\dot{\ga}_\eps(s))+v_\eps(s)\,T_{\ga_\eps(s)},
\] 
and
\begin{equation}
\label{eq:plateau0}
v_\eps(s):=\frac{\sg'(\eps)-2}{2}\,\big(1-\cos(2s)\big)+1.
\end{equation}
Thus, the map $F$ fails to be an immersion only at the points $(\eps,\pi/2)$ such that $\sg'(\eps)=1$. Along the other points the area element satisfies
\begin{equation}
\label{eq:plateau1}
da=|\text{Jac}\,F|(\eps,s)\,d\eps\,ds=|V_\eps(s)|\,d\eps\,ds=\frac{\sqrt{4v_\eps(s)^2+v_\eps'(s)^2}}{2}\,d\eps\,ds,
\end{equation}
because $|\dot{\ga}_\eps(s)|=1$ and $\escpr{V_\eps(s),\dot{\ga}_\eps(s)}=0$. A pair $(\eps,s)$ for which $F$ is an immersion produces a singular point if and only if $\sg'(\eps)<1$ and $s\in\{s(\eps),\pi-s(\eps)\}$, where
\begin{equation}
\label{eq:plateau2}
s(\eps):=\frac{1}{2}\arccos\left(\frac{\sg'(\eps)}{\sg'(\eps)-2}\right).
\end{equation}
On the other hand, the map
\begin{equation}
\label{eq:plateau3}
N(\eps,s):=\frac{-2v_\eps(s)\,J(\dot{\ga}_\eps(s))+v_\eps'(s)\,T_{\ga_\eps(s)}}{\sqrt{4v_\eps(s)^2+v_\eps'(s)^2}}
\end{equation}
provides a unit normal when $F$ is an immersion. From here, we deduce
\begin{equation}
\label{eq:plateau4}
\mnh(\eps,s)=\frac{\pm 2\,v_\eps(s)}{\sqrt{4v_\eps(s)^2+v_\eps'(s)^2}},
\end{equation}
with positive sign if and only if $\sg'(\eps)\geq 1$ or $\sg'(\eps)<1$ and $s\in[0,s(\eps)]\cup[\pi-s(\eps),\pi]$. In such cases, we  compute $Z(\eps,s)$ from \eqref{eq:plateau3} to get $\dot{\ga}_\eps(s)=Z(\eps,s)$. As a consequence, we can choose the orientation of $\Sg$ so that any $\ga_\eps(s)$ with $s\in[0,\delta)$ is a characteristic curve of $\Sg$. As $\Sg$ is compact this property holds until $\ga_\eps$ meets a singular point.

Now, let us prove that $\Sg$ cannot contain singular curves. On the contrary, we would have $\sg'<1$ in an open interval $I\subset[0,2\pi]$. Observe that the singular curve $c(\eps):=F(\eps,s(\eps))$ with $\eps\in I$ does not touch the set $C_0\cup L$. Since $\Sg-C_0$ is an area-stationary $C^2$ surface we know from \cite[Thm.~4.17]{rr2}, see also \cite[Thm.~4.5]{hr1}, that the characteristic curves $\ga_\eps(s)$ with $s\in(0,s(\eps))$ meet orthogonally $c(\eps)$. By having in mind that
\[
\dot{c}(\eps)=V_\eps(s(\eps))+s'(\eps)\,\dot{\ga}_\eps(s(\eps)), \quad \eps\in I,
\]
this is equivalent to that $s'(\eps)=0$ for any $\eps\in I$. From equation~\eqref{eq:plateau2} there are constants  $s_0\in (0,\pi/2)$ and $\sg_0'<1$ such that $s(\eps)=s_0$ and $\sg'(\eps)=\sg'_0$, for any $\eps\in[0,2\pi]$. Note that $N(\eps,s_0)=-T_{c(\eps)}$ by \eqref{eq:plateau3}. Next, according with the results in \cite[Prop.~3.5, Cor.~3.6]{chmy} (see also \cite[Thm.~4.3]{hr1}), that describe locally $\Sg-C_0$ around $c(\eps)$, we can continue the surface $\Sg$ beyond $c(\eps)$ by following the curves $\ga_\eps(s)$ for $s\geq s_0$ small enough. Indeed, from the discussion below \eqref{eq:plateau4} we obtain characteristic curves by reversing the orientation of $\ga_\eps(s)$ with $s>s_0$. By using that $\Sg$ is compact and that the curve $L$ (which we reach when $s=\pi/2$) is vertical, we can prolong the curves $\ga_\eps(s)$ inside $\Sg$ until we meet a new singular curve $d(\eps):=F(\eps,\pi-s_0)$. Observe that $N(\eps,\pi-s_0)=T_{d(\eps)}$ and so, $d(\eps)$ is disjoint from $c(\eps)$. Finally, we reason as above to conclude that $\ga_\eps(s)$ with $s>\pi-s_0$ is a characteristic curve of $\Sg$ until we come back to the starting circle $C_0$ at $s=\pi$. In this way we would produce an open neighborhood of $C_0$ in $\Sg$ by matching two surfaces where $\escpr{N,T}<0$ and $\escpr{N,T}>0$, respectively. This contradicts that $C_0=\ptl\Sg$ because $\Sg$ is embedded.

Now, we are ready to prove the claim. From the fact that $\Sg$ does not contain singular curves we infer that $\sg'\geq 1$ in $[0,2\pi]$. Hence the set $\widetilde{\Sg}:=F([0,2\pi)\times[0,\pi/2))$ is an immersed surface with $\widetilde{\Sg}_0=\emptyset$ and $\widetilde{\Sg}\cap L=\emptyset$. Note that $\widetilde{\Sg}\subseteq\Sg$ because $\Sg$ is compact. From equations \eqref{eq:plateau4}, \eqref{eq:plateau1} and \eqref{eq:plateau0}, we get 
\begin{equation}
\label{eq:plateau5}
\begin{split}
A(\Sg)\geq A(\widetilde{\Sg})&=\int_{\widetilde{\Sg}}\mnh\,da=\int_{[0,2\pi)\times[0,\pi/2)}\mnh(\eps,s)\,|\text{Jac}\,F|(\eps,s)\,d\eps\,ds
\\
&=\int_{[0,2\pi)\times[0,\pi/2)}v_\eps(s)\,d\eps\,ds=\frac{\pi}{4}\,\int_0^{2\pi}\sg'(\eps)\,d\eps\geq\frac{\pi^2}{2}.
\end{split}
\end{equation}
If equality holds then $\sg'=1$ in $[0,2\pi]$, so that $\sg(\eps)=\eps+\eps_0$ for some $\eps_0\in[0,2\pi)$. Thus $U(\eps)=-(\cos(\eps_0),\sin(\eps_0),0,0)$, and we would deduce from \eqref{eq:great} that $\widetilde{\Sg}\subseteq\s_0(\eps_0)$, where $\s_0(\eps_0)=\var_{\eps_0}(\s_0)$ is the totally geodesic $2$-sphere defined by intersecting $\stres$ with the hyperplane of equation $\sin(\eps_0)\,x_1-\cos(\eps_0)\,y_1=0$. More precisely, we have $\widetilde{\Sg}=\s^+_0(\eps_0\pm\pi)-\{\text{pole}\}$ (with positive sign if and only if $\eps_0\in[0,\pi)$). As $\Sg$ is compact and connected with $\ptl\Sg=C_0$ this would give $\Sg=\s^+_0(t)$ for some $t\in[0,2\pi)$, which contradicts the hypotheses. Therefore, inequality \eqref{eq:plateau5} reads
\[
A(\Sg)>\frac{\pi^2}{2}=A(\s^+_0(t))=A(\s^+_0),
\]
and the proof is completed.
\end{proof}

As a direct consequence of Theorem~\ref{th:main} we infer a sharp lower bound for the area of compact surfaces without boundary and containing $C_0$.

\begin{corollary}
\label{cor:sphere}
If $\Sg\subset\stres$ is a compact $C^1$ surface with empty boundary such that $C_0\subset\Sg$ and $\Sg-C_0$ is disconnected, then $A(\Sg)\geq\pi^2$. Moreover, if $A(\Sg)=\pi^2$ and $\Sg$ is $C^2$, then $\Sg=\var_t(\s_0)$ for some $t\in[0,2\pi)$. In particular $\s_0$ uniquely minimizes the area, up to vertical translations, among all the $C^2$ topological $2$-spheres of $\stres$ containing $C_0$.
\end{corollary}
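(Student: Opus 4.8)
The plan is to reduce everything to Theorem~\ref{th:main} by cutting $\Sg$ along the circle $C_0$. First I note that $C_0\sub\Sg-\Sg_0$: at every $p\in C_0$ the tangent line to $C_0$ is vertical, so $T(p)\in T_p\Sg$ and hence $T_p\Sg\neq\h_p$. Thus $C_0$ is a regular circle and admits a two-sided collar neighborhood in $\Sg$. Discarding any closed components of $\Sg$ disjoint from $C_0$ (these only enlarge the area), I may assume $\Sg$ is connected; then, since $\Sg-C_0$ is disconnected, the two-sided circle $C_0$ separates $\Sg$ into exactly two compact, orientable $C^1$ surfaces $\Sg_1,\Sg_2$, each of which has $\ptl\Sg_i=C_0$ (a boundaryless closed $1$-submanifold of the circle $C_0$ must be all of $C_0$). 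As $C_0$ has vanishing area, the integral \eqref{eq:area} is additive, so $A(\Sg)=A(\Sg_1)+A(\Sg_2)$.

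With this decomposition the lower bound is immediate: applying Theorem~\ref{th:main} to each piece and recalling that $A(\s_0^+)=\pi^2/2$ (computed in the proof of Proposition~\ref{prop:uniqueplateau}), I obtain $A(\Sg_i)\geq\pi^2/2$, and summing yields $A(\Sg)\geq\pi^2$.

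For the equality case, assume $A(\Sg)=\pi^2$ with $\Sg$ of class $C^2$. Then $A(\Sg_1)=A(\Sg_2)=\pi^2/2$, so each $\Sg_i$ realizes the minimum in Theorem~\ref{th:main}; since $\Sg_i$ is $C^2$, the uniqueness part gives $\Sg_i=\s_0^+(t_i)$ for some $t_i\in[0,2\pi)$. Each $\s_0^+(t_i)$ is one of the two halves into which $C_0$ splits the totally geodesic $2$-sphere $\var_{t_i}(\s_0)=\stres\cap\{\sin(t_i)\,x_1-\cos(t_i)\,y_1=0\}$. The $C^2$ (hence $C^1$) regularity of $\Sg$ forces $T_p\Sg_1=T_p\Sg_2$ for every $p\in C_0$; a direct computation shows that the common tangent plane of $\var_t(\s_0)$ along $C_0$ determines $t$ modulo $\pi$, whence $t_1\equiv t_2\pmod\pi$ and $\var_{t_1}(\s_0)=\var_{t_2}(\s_0)$. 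Because $\Sg_1$ and $\Sg_2$ lie on opposite sides of $C_0$ they are the two distinct, complementary halves of this single geodesic sphere (indeed $\s_0^+(t_1+\pi)=\s_0^-(t_1)$, using $\var_\pi(\s_0^+)=\s_0^-$), so $\Sg=\var_{t_1}(\s_0)$, as claimed. Finally, if $\Sg$ is a $C^2$ topological $2$-sphere containing $C_0$, the Jordan curve theorem shows $\Sg-C_0$ has exactly two components, so the corollary applies and its equality case gives the last assertion.

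The main obstacle is the equality step: identifying the two minimizing half-spheres $\s_0^+(t_1)$ and $\s_0^+(t_2)$ as complementary halves of one and the same totally geodesic sphere. This is where the $C^1$ matching of tangent planes along $C_0$ is essential, together with the fact that distinct halves sharing a tangent plane along $C_0$ must differ by the shift $t\mapsto t+\pi$. A secondary technical point is verifying that cutting along $C_0$ produces pieces with boundary exactly $C_0$, which is why I first checked that $C_0$ lies in the regular set $\Sg-\Sg_0$.
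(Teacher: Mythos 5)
Your core argument is exactly the route the paper intends: Corollary~\ref{cor:sphere} is presented there as a direct consequence of Theorem~\ref{th:main}, and your proof spells that consequence out correctly in its main body. Cutting along $C_0$ (which lies in the regular set, so the closures of the two components of $\Sg-C_0$ are compact $C^1$ surfaces with boundary $C_0$), applying Theorem~\ref{th:main} to each half with $A(\s_0^+)=\pi^2/2$ from Proposition~\ref{prop:uniqueplateau}, and, in the equality case, using the uniqueness statement on each half together with the $C^1$ matching of tangent planes along $C_0$ (which indeed determines $t$ modulo $\pi$, and $\s_0^+(t+\pi)=\s_0^-(t)$ since $\var_\pi$ is the antipodal map) to conclude $\Sg=\s_0^+(t_1)\cup\s_0^-(t_1)=\var_{t_1}(\s_0)$ --- all of this is sound, as is the appeal to the Jordan curve theorem for the final claim about topological spheres.

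There is, however, a genuine gap in your opening reduction. After ``discarding any closed components of $\Sg$ disjoint from $C_0$'' the hypothesis that $\Sg-C_0$ is disconnected need not survive: if $\Sg=\Sg'\sqcup\Sg''$ where $C_0\sub\Sg'$, $\Sg'-C_0$ is connected and $\Sg''$ is a closed surface, then the original $\Sg-C_0$ is disconnected, yet the surface you keep fails the hypothesis and your cutting argument cannot even start. Worse, this case is not repairable, because the literal statement is false for such disconnected competitors: rotating by $r_\theta$ (see \eqref{eq:rtheta}) a short embedded loop in $\sph^2_+$ through the pole $(0,0,1,0)$ produces an embedded torus $T$ containing $C_0$ as a non-separating circle, and since $|N_h|\leq 1$, the rotations are isometries and the orbit speed is $\sin(d_L)\leq 1$, one gets $A(T)\leq 2\pi\,\ell$ with $\ell$ the length of the loop; taking $\ell<\pi/2$ and adding a tiny disjoint sphere yields a compact surface satisfying the stated hypotheses with total area $<\pi^2$. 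So the corollary must be read with $\Sg$ connected (equivalently, with the hypothesis that the two local sides of $C_0$ lie in different components of $\Sg-C_0$); under that reading your discarding sentence should simply be deleted, and the rest of your proof is complete. One further small point to record when invoking Theorem~\ref{th:main}: the pieces $\Sg_1,\Sg_2$ are orientable, which holds because every closed embedded surface in $\stres$ is orientable and subsurfaces of orientable surfaces are orientable.
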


\section{Isoperimetric property of the spheres $\s_\la$}
\label{sec:isoperimetric}

In this section we show that the spherical surfaces $\s_\la$ with $\la>0$ are solutions to the isoperimetric problem in $(\stres,g_h)$ among sets inside a vertical solid tube and containing a horizontal section of the tube. This provides a counterpart in $(\stres,g_h)$ of a theorem by Ritor\'e for the sub-Riemannian Heisenberg group~~\cite{ritore-calibrations}. Indeed, we will employ the foliations constructed in Section~\ref{sec:foliation} for adapting the arguments in \cite[Thm.~3.1]{ritore-calibrations} to the present situation.

\begin{theorem}
\label{th:main2}
Let $\Om\subset\stres$ be a set with $C^1$ boundary $\Sg$ and volume $V(\Om)\leq V(\stres)/2$. Suppose that there is $\la>0$ such that $\Om$ is contained in the tube $\mathcal{W}_\la$ defined in \eqref{eq:wla} and $\overline{\Om}\cap\sph^2_\la=D_\la$. Then, we have
\[
A(\Sg)\geq A(\s_\mu),
\]
where $\s_\mu$ is the spherical surface in $(\stres,g_h)$ with $V(\be_\mu)=V(\Om)$. Moreover, if equality holds, then $\overline{\Om}=\be_\mu$.
\end{theorem}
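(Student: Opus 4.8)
The plan is to run the same calibration argument as in Theorem~\ref{th:main}, but now with the vector field $X^+_\la$ from Proposition~\ref{prop:foliation}~(iv), whose divergence is $-2\la$ rather than $0$, and to combine the area and volume terms exactly as in the stationarity identity \eqref{eq:eso}. First I would reduce to the case $\mu=\la$: since $V(\Om)=V(\be_\mu)$ by definition and the volume function $m(\la)=V(\be_\la)$ is decreasing by Proposition~\ref{prop:spheres}~(vii), there is a unique such $\mu$, and I will compare $\Sg$ directly with $\s_\mu$ and its enclosed ball $\be_\mu$. The key quantity to control is the \emph{calibrated functional} $A(\Sg)-2\la\,V(\Om)$ (I would actually use $\mu$ in place of $\la$ once reduced); the point of the foliation is that $X^+_\mu$ calibrates precisely this combination, so that among all competitors with the prescribed volume the minimum of the functional is attained by $\s_\mu$, which then forces $A(\Sg)\geq A(\s_\mu)$ once the $-2\mu\,V$ terms cancel.

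The heart of the matter is the divergence-theorem computation. I would apply Stokes/divergence to $X^+_\mu$ over the region $\Om$ (or over the symmetric difference of $\Om$ with $\be_\mu$), which both live inside the tube $\mathcal{W}_\mu$. Writing $\ptl\Om=\Sg$ and decomposing $\Sg=\Sg_\mu^+\cup\Sg_\mu^-$ along the separating sphere $\sph^2_\mu$, the boundary contribution over the equatorial disk $D_\la$ is controlled because $\overline{\Om}\cap\sph^2_\la=D_\la$ pins $\Sg$ to agree with $\be_\mu$ there; the flux of $X^+_\mu$ through the remaining part of $\Sg$ is bounded by $\mnh$ via the Cauchy–Schwarz inequality exactly as in \eqref{eq:defe}, namely $\escpr{X^+_\mu,N}\leq\mnh$ pointwise on the regular set. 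Meanwhile $\int_\Om\divv X^+_\mu\,dv=-2\mu\,V(\Om)$ supplies the volume term. As in the proof of Theorem~\ref{th:main}, the field $X^+_\mu$ is not defined along $L$ (and the boundary circles), so I would insert the logarithmic cut-off functions $f_\eps$ and pass to the limit, using that the tube around $L$ has volume $O(\eps^2)$ so that the cut-off error $\int|\nabla f_\eps|\,dv\to 0$; this step transports verbatim from the earlier proof.

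The equality discussion is the genuinely new and cleaner part, as the introduction advertises. When $A(\Sg)=A(\s_\mu)$, the Cauchy–Schwarz inequality must be saturated wherever $\mnh\neq0$, forcing $\nu_h=X^+_\mu$ on the regular set of $\Sg$. The decisive geometric input is that $\Sg$ is tangent to $\s_\mu$ along the non-singular circle $C_\mu=\ptl D_\mu$, where $X^+_\mu$ is smooth and $C_\mu$ consists of regular points; matching of the horizontal Gauss maps there means $\Sg$ and $\s_\mu$ share the characteristic direction $Z=J(X^+_\mu)$, so their characteristic curves through points of $C_\mu$ are the same $(\stres,g)$-geodesics, giving $\Sg=\s_\mu$ in a neighborhood of $C_\mu$. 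A connectedness/completeness argument then propagates this coincidence: the set where $\Sg$ and $\s_\mu$ agree is open (by the local matching) and closed in the regular set, and one extends across the poles using that $\s_\mu$ is an embedded sphere, concluding $\overline{\Om}=\be_\mu$.

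The main obstacle I anticipate is the bookkeeping of boundary terms in the divergence theorem, specifically controlling the flux of $X^+_\mu$ across the equatorial disk $D_\la$ and ensuring the condition $\overline{\Om}\cap\sph^2_\la=D_\la$ is used correctly to make the comparison between $\Om$ and $\be_\mu$ exact rather than merely one-sided. A secondary difficulty is handling the singular point (the pole of $\s_\mu$ on $L$) in the equality analysis, but here the situation is far more rigid than in Theorem~\ref{th:main}: because the competitor $\Sg$ is tangent to $\s_\mu$ along an entire circle $C_\mu$ of regular points, no delicate singular-curve analysis (as in Proposition~\ref{prop:uniqueplateau}) is needed, and the completeness argument closes the gap directly.
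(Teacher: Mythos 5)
Your ``reduction to the case $\mu=\la$'' is not a reduction --- it is precisely where the proof breaks. The hypotheses of the theorem are tied to $\la$, not to $\mu$: you are given $\Om\subset\mathcal{W}_\la$ and $\overline{\Om}\cap\sph^2_\la=D_\la$, and nothing guarantees either $\Om\subset\mathcal{W}_\mu$ (if $V(\Om)<V(\be_\la)$ then $\mu>\la$ and $\mathcal{W}_\mu\subsetneq\mathcal{W}_\la$, so $X^+_\mu$ is not even defined on all of $\Om$) or $\overline{\Om}\cap\sph^2_\mu=D_\mu$ (the disk hypothesis controls the intersection with $\sph^2_\la$ only, and that intersection is exactly what makes the flux through the equatorial disk common to the competitor and the model, so that it cancels). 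Hence calibrating with $X^+_\mu$ is simply not available. The paper calibrates at $\la$, obtaining $A(\Sg)-2\la\,V(\Om)\geq A(\s_\la)-2\la\,V(\be_\la)$, and then bridges from $\la$ to $\mu$ by a convexity argument that is missing from your proposal: setting $\xi(\alpha):=A(\s_\alpha)+2\alpha\,(V(\Om)-V(\be_\alpha))$, the stationarity identity \eqref{eq:eso} gives $\xi'(\alpha)=2\,(V(\Om)-V(\be_\alpha))$, and Proposition~\ref{prop:spheres}~(vii) gives $\xi''>0$; thus $\xi$ is strictly convex with minimum at $\mu$, and $A(\Sg)\geq\xi(\la)\geq\xi(\mu)=A(\s_\mu)$. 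Without this step your claim that ``the $-2\mu\,V$ terms cancel'' has no content: the calibrated inequality you can actually prove carries the multiplier $\la$, not $\mu$.

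A second, independent gap: even in the special case $\mu=\la$, a single field $X^+_\la$ over all of $\Om$ cannot yield the sharp inequality, because $X^+_\la$ coincides with the horizontal Gauss map only along the vertical translates of the \emph{upper} half-sphere; on $\s_\la^-$ the Cauchy--Schwarz inequality $\escpr{X^+_\la,N}\leq\mnh$ is strict somewhere, so the model itself is not calibrated and the comparison is lossy. The paper splits $\Om$ along $\sph^2_\la$ into $\Om_\la^+$ and $\Om_\la^-$ and applies the divergence theorem twice, with $X^+_\la$ on one half and $X^-_\la$ on the other; in each half the flux through $D_\la$ is identical for $\Om$ and for $\be_\la$ and subtracts out, and then the two half-inequalities are added. (You would also need the technical step of replacing $\Om_\la^+$ by $\var_t(\Om_\la^+)$ glued to a thin vertical tube over $D_\la$ and letting $t\to 0$, since $\Sg$ may meet the open disk $D_\la-C_\la$.) Your equality discussion --- tangency along the regular circle $C_\mu$, identification of characteristic curves, propagation by compactness --- is essentially the paper's and is fine in outline, but note that equality must first force $\mu=\la$, which comes from strict convexity of $\xi$, a point you cannot even formulate without the $\xi$-argument.
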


\begin{proof}
For simplicity we will denote $\Om^+:=\Om_\la^+$, $\Om^-:=\Om_\la^-$, $\Sg^+:=\Sg_\la^+$ and $\Sg^-:=\Sg_\la^-$, see equation~\eqref{eq:halves}. We first derive the minimization property
\begin{equation}
\label{eq:x1}
A(\Sg)-2\la\,V(\Om)\geq A(\s_\la)-2\la\,V(\be_\la).
\end{equation}
For that we will apply the divergence theorem over $\Om^+$ and $\Om^-$ to the vector fields $X^+_\la$ and $X^-_\la$ constructed in Proposition~\ref{prop:foliation} (iv). This requires approximation since the set $\Sg\cap (D_\la-C_\la)$ could be non-empty and $X^+_\la$, $X^-_\la$ are defined on $\mathcal{W}_\la-L$ whereas $\Om\cap L\neq\emptyset$.  

As in the proof of Theorem~\ref{th:main} we can find a sequence of smooth functions $f_\eps:\mathcal{W}_\la\to [0,1]$, for $\eps>0$ small enough, such that:
\begin{itemize}
\item[(a)] $\text{supp}(f_\eps)\subeq\mathcal{W}_\la-L$,
\item[(b)] $\lim_{\eps\to 0}f_\eps(p)=1$, for any $p\in W_\la-L$,
\item[(c)] $\nabla f_\eps=0$ except for the compact region $
K_\eps:=\{p\in\stres\,;\,\eps^2\leq d_L(p)\leq\eps\}$, where we have $|\nabla f_\eps|\leq 2/\eps$.
\end{itemize}

Consider the foliation of $\mathcal{W}_\la$ by vertical translations of $\s_\la^+$. The associated vector field $X^+_\la$ is a unit horizontal $C^\infty$ vector field on $\mathcal{W}_\la-L$, whose restriction to $\s_\la^+-\{e\}$ coincides with the horizontal Gauss map $(\nu_0)_h$, and verifies $\divv X^+_\la=-2\la$ on $\mathcal{W}_\la-L$. We define $Y_\eps:=f_\eps\,X^+_\la$, which is a $C^\infty$ vector field on $\mathcal{W}_\la$ with $\text{supp}(Y_\eps)\sub\mathcal{W}_\la-L$. Since $\divv Y_\eps=-2\la\,f_\eps+\escpr{\nabla f_\eps,X^+_\la}$, we can proceed as in the proof of \eqref{eq:main2} with the help of the dominated convergence theorem to deduce
\begin{equation}
\label{eq:x2}
\lim_{\eps\to 0}\int_E\divv Y_\eps\,dv=-2\la\,V(E),\quad\text{ for any set }E\subeq\mathcal{W}_\la.
\end{equation}

Now, we take a set $\Om$ as in the statement. We will approximate $\Om$ by sets $\Om_t$ in similar conditions and satisfying that $\ptl\Om_t\cap D_\la=C_\la$. We remark that this procedure is unnecessary when $\Sg\cap D_\la=C_\la$, as happens for instance if $\Om=\be_\la$.

For any $t\geq0$ we consider the set $\var_t(\Om^+)$, where $\var_t$ is the vertical translation in \eqref{eq:verttrans}. Since $\overline{\Om}\cap\sph^2_\la=D_\la$ and $\mathcal{W}_\la$ is invariant under $\var_t$, there is $\delta>0$ such that $\var_t(\Om^+)\sub\mathcal{W}_\la$ and $\overline{\var_t(\Om^+)}\cap\sph^2_\la=\emptyset$ for any $t\in (0,\delta]$. We denote by $\Om^+_t$ the union of $\var_t(\Om^+)$ with the small vertical tube $\bigcup_{s\in[0,t]}\var_s(D_\la)$. The boundary $\ptl\Om^+_t$ is the union of $\var_t(\Sg^+)$ together with $D_\la$ and a small piece $\mathcal{C}_t$ of the Clifford torus $\ptl\mathcal{W}_\la$. Hence, the divergence theorem applied to the vector field $Y_\eps$ over $\Om^+_t$ yields
\[
\int_{\Om^+_t}\divv Y_\eps\,dv=-\int_{D_\la}f_\eps\,\escpr{X^+_\la,N_{D_\la}}\,da-\int_{\var_t(\Sg^+)}f_\eps\,\escpr{X_\la^+,N_t}\,da-\int_{\mathcal{C}_\la}f_\eps\,\escpr{X^+_\la,\eta}\,da,
\]
where $N_{D_\la}$, $N_t$ and $\eta$ are the corresponding unit normals in $(\stres,g)$ pointing into $\Om^+_t$. Taking limits when $\eps\to 0$ we get from \eqref{eq:x2} and the dominated convergence theorem
\begin{align*}
-2\la\,V(\Om^+_t)&=-\int_{D_\la}\escpr{X^+_\la,N_{D_\la}}\,da-\int_{\var_t(\Sg^+)}\escpr{X_\la^+,N_t}\,da-\int_{\mathcal{C}_t}\escpr{X^+_\la,\eta}\,da
\\
&=-\int_{D_\la}\escpr{X^+_\la,N_{D_\la}}\,da-\int_{\Sg^+}\escpr{X_\la^+,N}\,da-\int_{\mathcal{C}_t}\escpr{X^+_\la,\eta}\,da,
\end{align*}
where $N$ is the normal on $\Sg$ pointing into $\Om$, and we have employed the change of variables formula together with equalities $X^+_\la\circ\var_t=\var_t\circ X^+_\la$ and $N_t\circ\var_t=\var_t\circ N$. By passing to the limit when $t\to 0$ and having in mind that $\{V(\Om^+_t)\}\to V(\Om^+)$ and $\{A(\mathcal{C}_t)\}\to 0$, we infer
\begin{equation}
\label{eq:x21}
\begin{split}
-2\la\,V(\Om^+)&=-\int_{D_\la}\escpr{X^+_\la,N_{D_\la}}\,da-\int_{\Sg^+}\escpr{X^+_\la,N}\,da
\\
&\geq -\int_{D_\la}\escpr{X^+_\la,N_{D_\la}}\,da-A(\Sg^+),
\end{split}
\end{equation}
where we have used the Cauchy-Schwarz inequality and that $X^+_\la$ is a unit horizontal vector field. In case $\Om=\be_\la$ we get equality above since $X^+_\la=(\nu_0)_h$ on $\s_\la^+-\{e\}$. This means that
\[
-2\la\,V(\be_\la^+)=-\int_{D_\la}\escpr{X^+_\la,N_{D_\la}}\,da-A(\s_\la^+).
\]
From the last two equations we derive the inequality
\begin{equation}
\label{eq:x3}
A(\Sg^+)-2\la\,V(\Om^+)\geq A(\s_\la^+)-2\la\,V(\be_\la^+).
\end{equation}
If we reproduce the previous arguments with the set $\Om^-$ and the vector field $X_\la^-$ associated to the foliation of $\mathcal{W}_\la$ by vertical translations of $\s_\la^-$, then we conclude that
\begin{equation}
\label{eq:x4}
A(\Sg^-)-2\la\,V(\Om^-)\geq A(\s_\la^-)-2\la\,V(\be_\la^-).
\end{equation}
Thus \eqref{eq:x1} follows by adding \eqref{eq:x3} and \eqref{eq:x4} since $D_\la$ has no contribution to $V(\Om)$ and $V(\be_\la)$.

Next, we define the function $\xi:[0,+\infty)\to\rr$ by $\xi(\alpha):=A(\s_\alpha)+2\alpha\,(V(\Om)-V(\be_\alpha))$. By taking derivatives $'$ with respect to $\alpha$, we infer that
\begin{align*} 
\xi'(\alpha)&=A'(\s_\alpha)-2\alpha\,V'(\be_\alpha)+2\,\big(V(\Om)-V(\be_\alpha)\big)=2\,\big(V(\Om)-V(\be_\alpha)\big),
\\
\xi''(\alpha)&=-2\,V'(\be_\alpha),
\end{align*}
where in the first equality we have used \eqref{eq:eso} and that the spheres $\s_\alpha$ are volume-preserving area-stationary. As $V(\be_\alpha)$ is decreasing for $\alpha\in(0,+\infty)$ by Proposition~\ref{prop:spheres} (vii), we conclude that $\xi$ is strictly convex and attains its minimum at the unique value $\mu\in [0,+\infty)$ for which $V(\be_\mu)=V(\Om)$ (this value exists because $V(\Om)\leq V(\stres)/2$). From \eqref{eq:x1} we get
\[
A(\Sg)\geq\xi(\la)\geq\xi(\mu)=A(\s_\mu),
\]
which is the desired area comparison.

Finally, suppose that $A(\Sg)=A(\s_\mu)$. Then we have $\xi(\la)=\xi(\mu)$, so that $\mu=\la>0$. Moreover, we also obtain equalities in \eqref{eq:x3} and \eqref{eq:x4}, which entails by \eqref{eq:x21} that
\begin{equation}
\label{eq:x5}
\begin{split}
\nu_h&=X^+_\mu \ \text{ on } \Sg^+-(\Sg_0\cup L),
\\
\nu_h&=X^-_\mu \ \text{ on } \Sg^--(\Sg_0\cup L).
\end{split}
\end{equation}
Recall that the same equalities hold in $\s^+_\mu-L$ and $\s^-_\mu-L$ by the definition of $X^+_\mu$ and $X^-_\mu$. 

Let us see that $\overline{\Om}=\be_\mu$. It is clear that the circle $C_\mu=\ptl D_\mu$ is contained in $\Sg-L$ and $\s_\mu-L$. The surfaces $\Sg$ and $\s_\mu$ are tangent along $C_\mu$ because $\overline{\Om},\mathcal{B}_\mu\subeq\mathcal{W}_\mu$ and $C_\mu\subset\ptl\mathcal{W}_\mu$. As the singular set $(\s_\mu)_0$ equals $\{e,e^\mu\}\sub L$, it follows that $C_\mu\subset\Sg-\Sg_0$ and $C_\mu\subset\s_\mu-(\s_\mu)_0$. For any $p\in C_\mu$ we denote by $\beta_p$ the maximal characteristic curve of $\s_\mu$ passing through $p$. By definition of $\s_\mu$ and Proposition~\ref{prop:spheres} (iv) the trace of $\beta_p$ equals the trace of a geodesic $\gamma_\theta:(0,\pi/\sqrt{1+\mu^2})\to\s_\mu$ defined in \eqref{eq:geocoor}. Let $\alpha_p$ be a characteristic curve of $\Sg-L$ through $p$. By \eqref{eq:x5} we get $\alpha_p'(0)=\beta_p'(0)$, which is a tangent vector transversal to $\sph^2_\mu$. Hence, we can find $\delta_p>0$ small enough so that $\alpha_p([0,\delta_p))\sub\Sg^--(\Sg_0\cup L)$ and $\alpha_p((-\delta_p,0])\sub \Sg^+-(\Sg_0\cup L)$. Again from \eqref{eq:x5} we see that the restrictions of $\beta_p$ and $\alpha_p$ to $[0,\delta_p)$ (resp. $(-\delta_p,0]$) are integral curves through $p$ of the $C^\infty$ vector field  $J(X_\mu^-)$ (resp. $J(X^+_\mu)$) defined on $\mathcal{W}_\mu-L$. This implies that $\alpha_p=\beta_p$ in $[-\delta_p,\delta_p]$. Moving $p$ along $C_\mu$ we deduce that $\Sg-L$ contains a region $\mathcal{R}\sub\s_\mu-L$ given by the union of the curves $\beta_p(s)$ with $s\in [-\delta,\delta]$. Since $\Sg$ is compact and tangent to $\s_\mu$ over $\mathcal{R}$, the curves $\beta_p$ can be extended as characteristic curves of $\Sg-L$ until they meet $L$ or $\Sg_0$. This shows that $\s_\mu-L\subeq\Sg-L$. As a consequence $\s_\mu\subeq\Sg$ because $\Sg$ is a closed subset of $\stres$. From the fact that $A(\Sg)=A(\s_\mu)$ we obtain $\Sg=\s_\mu$. By having in mind that $D_\mu\sub\overline{\Om}\cap\be_\mu$, we conclude that $\overline{\Om}=\be_\mu$. This proves the theorem.
\end{proof}

\begin{remarks}
1. If we remove the hypothesis $V(\Om)\leq V(\stres)/2$ then we can use the inequality $V(\Om)\geq v(\Om):=\min\{V(\Om),V(\stres-\Om)\}$ into \eqref{eq:x1}, and define $\xi(\alpha):=A(\s_\alpha)+2\alpha\,(v(\Om)-V(\be_\alpha))$, to deduce that $A(\Sg)\geq\xi(\la)\geq\xi(\mu)=A(\s_\mu)$, where $\s_\mu$ is the spherical surface with $V(\be_\mu)=v(\Om)$. Moreover, if equality holds, then $V(\Om)\leq V(\stres)/2$ and $\overline{\Om}=\be_\mu$.

2. The area comparison is still valid when $\la=0$. In this case we only suppose that $\Om\subseteq\stres$ with $V(\Om)\leq V(\stres)/2$ and $\overline{\Om}\cap\sph^2_0=D_0$. Observe that the inequality $A(\Sg)\geq A(\s_0)$ in \eqref{eq:x1} comes from Theorem~\ref{th:main} since $\ptl\Sg^+=\ptl\Sg^-=C_0$. The rest of the proof continues without changes. If equality holds then $A(\Sg)=A(\s_0)$. When $\Sg$ is a $C^2$ surface, the condition $\overline{\Om}\cap\sph^2_0=D_0$ together with Theorem~\ref{th:main} lead us to $\overline{\Om}=\be_0$.

3. The arguments can be slightly modified to show that the isoperimetric inequality in the statement still holds when $\Om$ is a finite perimeter set in $(\stres,g_h)$ under the same hypotheses. The characterization of equality cases in this context is more delicate and would require a regularity result as the one of Monti and Vittone~\cite[Thm.~1.2]{monti-vittone} employed by Ritor\'e~\cite[Thm.~3.1]{ritore-calibrations} in the Heisenberg group. 
\end{remarks}

\providecommand{\bysame}{\leavevmode\hbox to3em{\hrulefill}\thinspace}
\providecommand{\MR}{\relax\ifhmode\unskip\space\fi MR }
\providecommand{\MRhref}[2]{%
  \href{http://www.ams.org/mathscinet-getitem?mr=#1}{#2}
}
\providecommand{\href}[2]{#2}

\end{document}